\documentclass[11pt]{article}

\usepackage[letterpaper,margin=1in]{geometry}
\usepackage[T1]{fontenc}
\usepackage{lmodern}
\usepackage{microtype}
\usepackage{amsmath,amssymb,amsthm,mathtools}
\usepackage{booktabs,longtable,array}
\usepackage{graphicx}
\usepackage{caption}
\usepackage{enumitem}
\usepackage{xcolor}
\usepackage[numbers,sort&compress]{natbib}
\usepackage[hidelinks]{hyperref}

\newtheorem{theorem}{Theorem}[section]
\newtheorem{lemma}[theorem]{Lemma}
\newtheorem{proposition}[theorem]{Proposition}

\theoremstyle{remark}

\newcommand{\R}{\mathbb{R}}
\newcommand{\E}{\mathbb{E}}
\newcommand{\Pp}{\mathbb{P}}

\newcommand{\argmin}{\operatorname*{arg\,min}}
\newcommand{\argmax}{\operatorname*{arg\,max}}
\newcommand{\conv}{\operatorname{conv}}
\newcommand{\interior}{\operatorname{int}}
\newcommand{\barchi}[2]{\overline{\chi}^{\,2}_{#1,#2}}

\usepackage{hyperref}

\title{\textbf{Empirical likelihood confidence regions for ordered bivariate means}}
\author{
Naresh Garg\\
\small Department of Computer Science, Aalto University, Espoo, Finland\\
\small ELLIS Institute Finland, Espoo, Finland\\
\small \texttt{\href{mailto:gargnaresh22@gmail.com}{gargnaresh22@gmail.com},
\href{mailto:naresh.garg@aalto.fi}{naresh.garg@aalto.fi}}
}
\date{}

\begin{document}
\maketitle

\begin{abstract}
Let $\boldsymbol{X}_i=(X_{1i},X_{2i})^\top$ be independent and identically distributed observations with mean $\boldsymbol{\mu}=(\mu_1,\mu_2)^\top$ constrained by $\mu_1\leq\mu_2$. We study empirical-likelihood inference for a fixed mean vector and distinguish it from the previously known test of equality against an ordered alternative. At a fixed interior point, the constrained empirical likelihood ratio has the usual $\chi^2_2$ limit. At a fixed boundary point $(m,m)^\top$, its limit is the chi-bar-square distribution $\tfrac12\chi^2_1+\tfrac12\chi^2_2$. By contrast, profiling the unknown common mean in the equality-versus-order test yields $\tfrac12\chi^2_0+\tfrac12\chi^2_1$, the $k=2$ ordered-mean case of El Barmi (1996). We give an exact reduction of the latter statistic to the empirical likelihood of the paired differences, establish the localization step needed for the fixed-boundary expansion, and derive a local-to-boundary limit showing that interior calibration is not uniform over $n^{-1/2}$-neighborhoods of the boundary. Monte Carlo experiments under Gaussian, Student $t_5$, and shifted log-normal sampling examine fixed, boundary, and local regimes with explicit numerical-failure accounting. Illustrative paired-data analyses show the practical distinction between fixed-candidate confidence regions, directional equality tests, and ordinary scalar empirical-likelihood intervals truncated to the nonnegative parameter space.
\end{abstract}

\noindent\textbf{Keywords:} chi-bar-square; empirical likelihood; order-restriction; local alternative; ordered mean; paired data.

\section{Introduction}

Order restrictions provide a natural way to incorporate qualitative scientific information into statistical inference. Such restrictions arise, for example, when responses are expected to increase with dose, when reliability measures deteriorate over time, when economic indicators have a prespecified ordering, or when the direction of change between paired measurements is known in advance. For finite-dimensional parameters, these assumptions commonly define convex cones or intersections of half-spaces. The classical literature has developed an extensive theory of estimation and testing in such parameter spaces, including isotonic regression, projection estimators, and likelihood-ratio tests under cone alternatives; see \citet{barlow1972statistical}, \citet{robertson1988order}, \citet{van2006restricted}, \citet{silvapulle2005} and \citet{garg2024unified}. Confidence-region construction raises an additional issue: the appropriate likelihood-ratio calibration may change when a candidate parameter lies on the boundary of the constrained space (\citet{hwang1994confidence} and \citet{park2014confidence}).

Empirical likelihood (EL) provides a nonparametric likelihood framework for addressing this problem. Rather than specifying a parametric distribution, EL assigns probability masses to the observed data subject to moment constraints (see \citet{owen1988empirical,owen1990,owen2001}). For an unconstrained $p$-dimensional mean, the empirical log-likelihood ratio evaluated at the true mean converges to $\chi^2_p$ under standard moment and nonsingularity conditions, giving a nonparametric analogue of Wilks' theorem. The methodology extends to parameters defined through general estimating equations and to additional restrictions imposed on those parameters (\citet{qinlawless1994,qin1995estimating}).

The usual Wilks calibration need not remain valid when the true parameter lies on the boundary of an inequality-constrained parameter space. At a boundary point, the local parameter space is a tangent cone rather than the full Euclidean space, and the limiting likelihood-ratio statistic is determined by projection of a Gaussian vector onto that cone (\citet{chernoff1954distribution},\citet{geyer1994asymptotics}). In parametric likelihood theory, this mechanism produces chi-bar-square distributions, namely mixtures of chi-square distributions with different degrees of freedom (see \citet{shapiro1985,selfliang1987,silvapulle2005}). The mixture components and their weights depend on the local geometry of the constraint and, in general, on the covariance structure.

The corresponding empirical-likelihood theory was developed by \citet{el1996empirical}, who established chi-bar-square limits for parameters defined through estimating equations under smooth inequality constraints. In particular, his ordered-means example considers testing the equality of $k$ means against an order-restricted alternative under a general nonsingular covariance structure. Related empirical-likelihood developments include inference for mean vectors under inequality constraints (\citet{chen2011empirical}), restricted one-way analysis of variance (\citet{wen2011restricted}), tests for stochastic ordering (\citet{elbarmi2012stochastic}), order-restricted inference with missing observations (\citet{wang2017order}), and inference under inequality constraints for dependent high-frequency data (\citet{liao2026empirical}). Against this background, the present paper specializes the theory to two ordered means and focuses on the inferential distinction between evaluating a fully specified candidate vector on the boundary and testing a composite equality hypothesis in which the common mean is profiled out. This distinction is particularly important for likelihood-ratio calibration and the construction of empirical-likelihood confidence regions.

Let
$$
\boldsymbol{\mu}=(\mu_1,\mu_2)^\top,
\qquad
\Theta=\{\boldsymbol{\mu}\in\mathbb{R}^2:\mu_1\leq\mu_2\},
$$
and let $L_n(\boldsymbol{\mu})$ denote the empirical likelihood for the bivariate mean. For confidence-region inversion, a candidate vector $\boldsymbol{\mu}_0\in\Theta$ is completely specified and is compared with the empirical likelihood maximized over $\Theta$. The relevant statistic is
$$
\ell_{\Theta,n}(\boldsymbol{\mu}_0)
=-2\log
\frac{L_n(\boldsymbol{\mu}_0)}
{\sup_{\boldsymbol{\mu}\in\Theta}L_n(\boldsymbol{\mu})}.
$$
At a fixed interior point satisfying $\mu_{01}<\mu_{02}$, the unrestricted EL maximizer belongs to $\Theta$ with probability tending to one, and the constrained statistic has the ordinary $\chi^2_2$ limit. At a fixed boundary point $\boldsymbol{\mu}_0=(m,m)^\top$, however, both coordinates are specified under the null, while the denominator is maximized over a half-space. The resulting limit is
$$
\frac{1}{2}\chi^2_1+\frac{1}{2}\chi^2_2.
$$

This fixed-vector problem is different from testing the composite equality null
$$
H_0:\mu_1=\mu_2
\qquad\text{against}\qquad
H_1:\mu_1<\mu_2.
$$
For this test, the common mean under $H_0$ is unspecified and is profiled out. The corresponding statistic is
$$
T_n=
-2\log
\frac{\sup_{\mu_1=\mu_2}L_n(\boldsymbol{\mu})}
{\sup_{\boldsymbol{\mu}\in\Theta}L_n(\boldsymbol{\mu})},
$$
and its limiting distribution is
$$
\frac{1}{2}\chi^2_0+\frac{1}{2}\chi^2_1,
$$
where $\chi^2_0$ denotes a point mass at zero. This is the $k=2$ ordered-mean case of \citet[Theorem~3.2 and Example~2]{el1996empirical}. Although the two statistics concern the same boundary $\mu_1=\mu_2$, their null hypotheses have different dimensions. The tangential direction corresponding to the common mean contributes to the fixed-vector statistic but is removed by profiling in the composite test. Consequently, using the same critical value for the two problems would be incorrect.

The half-space structure yields an exact scalar profile. With $Z_i=X_{2i}-X_{1i}$, profiling the bivariate empirical likelihood subject to $\mu_2-\mu_1=d$ is equivalent to scalar empirical likelihood for $\mathrm{E}(Z_i)=d$, whenever the relevant empirical likelihood is feasible. This identity simplifies computation and supports localization of the constrained maximizer.

Because the fixed-interior and fixed-boundary limits provide only pointwise calibrations, we also consider a location-shift triangular array
$$
\boldsymbol{X}_{ni}=\boldsymbol{\mu}_n+\boldsymbol{\varepsilon}_{ni},
$$
in which the error law is fixed and $\boldsymbol{\mu}_n$ approaches the equality boundary at rate $n^{-1/2}$. Motivated by the broader boundary literature (\citet{andrews2001testing,balabdaoui2009likelihood}), we derive a local-to-boundary limit that connects the boundary chi-bar-square law with the interior $\chi_2^2$ law and shows that the corresponding fixed-point calibrations are not uniform near the boundary. Inversion over $\Theta$ produces an order-respecting joint region by construction. For $d=\mu_2-\mu_1$, however, the constrained scalar EL ratio must be distinguished from an ordinary scalar EL interval intersected with $[0,\infty)$: at $d=0$, their limits are $\tfrac12\chi_0^2+\tfrac12\chi_1^2$ and $\chi_1^2$, respectively.

The paper makes four specific contributions within this framework. First, it gives a self-contained half-space specialization and quadratic-projection derivation of the existing constrained-EL boundary theory, with localization established before the projection approximation is applied. Second, it uses the scalar profile identity to connect the bivariate constrained problem with scalar EL for paired differences and to distinguish fixed-vector inference from the known composite test of \citet{el1996empirical}. Third, it provides pointwise calibrated joint confidence regions, clarifies the two scalar interval constructions, and obtains an explicit EL limit for local sequences approaching the boundary. \citet{chen2011empirical} Fourth, it examines finite-sample coverage and directional power under Gaussian, Student $t_5$, and shifted log-normal sampling, covering negative, moderate positive, and strong positive dependence. Monte Carlo uncertainty, convex-hull infeasibility, and numerical convergence are reported explicitly. Public paired-data examples illustrate the different inferential targets and are treated as methodological illustrations rather than causal treatment-effect analyses.

The remainder of the paper is organized as follows. Section~2 defines the constrained empirical likelihood, establishes the scalar profile identity, and develops the fixed-candidate, composite-null, confidence-set, and local-to-boundary results. Section~3 presents the simulation study. Section~4 reports the illustrative data analyses, and Section~5 discusses the implications of the proposed calibrations. Proofs and detailed coverage and size results are reported in Appendix B, while complete machine-readable results are available in the repository.

\section{Empirical likelihood under the order restriction}

Let $\boldsymbol{X}_1,\ldots,\boldsymbol{X}_n$ be i.i.d.\ random vectors in $\R^2$, with
$$
  \E(\boldsymbol{X}_i)=\boldsymbol{\mu},\qquad
  \Sigma=\operatorname{Var}(\boldsymbol{X}_i),
$$
where $\Sigma$ is positive definite.  Put
$$
  \boldsymbol{a}=(-1,1)^\top,\qquad
  \Theta=\{\boldsymbol{\mu}:\boldsymbol{a}^\top\boldsymbol{\mu}\geq0\},\qquad
  H=\{\boldsymbol{\mu}:\boldsymbol{a}^\top\boldsymbol{\mu}=0\}.
$$
Thus $\boldsymbol{a}^\top\boldsymbol{\mu}=\mu_2-\mu_1$.

Let
$$
  \mathcal P_n
  =
  \left\{\boldsymbol{p}=(p_1,\ldots,p_n):p_i\geq0,\ 
  \sum_{i=1}^np_i=1\right\}.
$$
For a specified candidate mean $\boldsymbol{\mu}\in\R^2$, define the normalized empirical likelihood
$$
  L_n(\boldsymbol{\mu})
  =
  \sup\left\{
  \prod_{i=1}^n(np_i):
  \boldsymbol{p}\in\mathcal P_n,\ 
  \sum_{i=1}^np_i(\boldsymbol{X}_i-\boldsymbol{\mu})=\boldsymbol{0}
  \right\},
$$
where the supremum over an empty set is zero.  Hence $L_n(\bar{\boldsymbol{X}})=1$.  $L_n(\boldsymbol{\mu})$ is the largest multinomial likelihood over weights whose weighted sample mean equals $\boldsymbol{\mu}$.

Define the convex hull as
$$
\operatorname{conv}(\boldsymbol{X}_1,\ldots,\boldsymbol{X}_n)
=
\left\{
\sum_{i=1}^{n} p_i\boldsymbol{X}_i
:
p_i\geq 0,\quad
\sum_{i=1}^{n}p_i=1
\right\}.
$$
Now, if $\boldsymbol{\mu}\in\interior\conv(\boldsymbol{X}_1,\ldots,\boldsymbol{X}_n)$, the Lagrange-multiplier argument of \citet{owen1990,owen2001} gives
$$
  p_i(\boldsymbol{\mu})
  =
  \frac{1}{n\{1+\boldsymbol{\lambda}(\boldsymbol{\mu})^\top(\boldsymbol{X}_i-\boldsymbol{\mu})\}},
  \qquad i=1,\ldots,n,
$$
where $\boldsymbol{\lambda}(\boldsymbol{\mu})\in\R^2$ is the unique solution of
$$
  \sum_{i=1}^n
  \frac{\boldsymbol{X}_i-\boldsymbol{\mu}}{1+\boldsymbol{\lambda}(\boldsymbol{\mu})^\top(\boldsymbol{X}_i-\boldsymbol{\mu})}
  =\boldsymbol{0},
  \qquad
  1+\boldsymbol{\lambda}(\boldsymbol{\mu})^\top(\boldsymbol{X}_i-\boldsymbol{\mu})>0\quad\forall i.
$$
The ordinary empirical log-likelihood ratio is
$$
  \ell_n(\boldsymbol{\mu})
  =
  -2\log L_n(\boldsymbol{\mu})
  =
  2\sum_{i=1}^n
  \log\{1+\boldsymbol{\lambda}(\boldsymbol{\mu})^\top(\boldsymbol{X}_i-\boldsymbol{\mu})\}.
$$

For any nonempty set $A\subset\R^2$, let $\mathcal D_{A,n}=\{\sup_{\boldsymbol{\nu}\in A}L_n(\boldsymbol{\nu})>0\}$. For $\boldsymbol{\mu}\in\Theta$, on the event $\mathcal D_{\Theta,n}$ define the constrained ratio
\begin{equation}\label{eq:elltheta}
  \ell_{\Theta,n}(\boldsymbol{\mu})
  =
  -2\log
  \frac{L_n(\boldsymbol{\mu})}{\sup_{\boldsymbol{\nu}\in\Theta}L_n(\boldsymbol{\nu})}
  =
  \ell_n(\boldsymbol{\mu})-\inf_{\boldsymbol{\nu}\in\Theta}\ell_n(\boldsymbol{\nu}).
\end{equation}
$\ell_{\Theta,n}$ is not generally the ordinary ratio merely restricted to $\Theta$. On $\mathcal D_{\Theta,n}^{c}$ the denominator is zero, so the raw likelihood ratio is $0/0$ and is left undefined; such constrained-denominator infeasibility is recorded separately. For random-variable notation in asymptotic statements only, we use the arbitrary extension $\ell_{\Theta,n}=0$ on $\mathcal D_{\Theta,n}^{c}$; this extension does not affect any limit because the relevant feasibility probabilities tend to one.

\subsection{Exact profile reduction to paired differences}

Let
$$
  Z_i=\boldsymbol{a}^\top \boldsymbol{X}_i=X_{2i}-X_{1i},
$$
and $\bar Z=n^{-1}\sum_{i=1}^n Z_i$. Define the scalar empirical likelihood
$$
  L_{Z,n}(d)
  =
  \sup\left\{
  \prod_{i=1}^n(np_i):
  \boldsymbol{p}\in\mathcal P_n,\ 
  \sum_{i=1}^np_i(Z_i-d)=0
  \right\}.
$$
Write $\ell_{Z,n}(d)=-2\log L_{Z,n}(d)$, with $\ell_{Z,n}(d)=+\infty$ when $L_{Z,n}(d)=0$.

The following identity reduces the constrained bivariate optimization to scalar empirical likelihood for the paired differences, and is needed both for computation and for the later localization argument.

\begin{lemma}[Exact profiling]\label{lem:profile}
For every $d\in\R$,
$$
  \sup_{\boldsymbol{\mu}:\boldsymbol{a}^\top\boldsymbol{\mu}=d}L_n(\boldsymbol{\mu})=L_{Z,n}(d).
$$
Consequently,
$$
  \sup_{\boldsymbol{\mu}\in H}L_n(\boldsymbol{\mu})=L_{Z,n}(0),
  \qquad
  \sup_{\boldsymbol{\mu}\in\Theta}L_n(\boldsymbol{\mu})=\sup_{d\geq0}L_{Z,n}(d).
$$
\end{lemma}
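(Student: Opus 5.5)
The identity follows by comparing the feasible sets of the two optimization problems directly in terms of the weight vectors $\boldsymbol{p}$. No Lagrange multipliers are needed, and the argument does not depend on $\boldsymbol{\mu}$ lying in the interior of the convex hull. For fixed $d$, let
$$
\mathcal F_d=\Bigl\{\boldsymbol{p}\in\mathcal P_n:\textstyle\sum_i p_i(Z_i-d)=0\Bigr\},
\qquad
\mathcal G_{\boldsymbol{\mu}}=\Bigl\{\boldsymbol{p}\in\mathcal P_n:\textstyle\sum_i p_i(\boldsymbol{X}_i-\boldsymbol{\mu})=\boldsymbol{0}\Bigr\}.
$$
Then $L_{Z,n}(d)=\sup_{\boldsymbol p\in\mathcal F_d}\prod(np_i)$ and $L_n(\boldsymbol\mu)=\sup_{\boldsymbol p\in\mathcal G_{\boldsymbol\mu}}\prod(np_i)$, with the convention $\sup\emptyset=0$. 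The whole claim reduces to the set identity
$$
\mathcal F_d=\bigcup_{\boldsymbol\mu:\boldsymbol a^\top\boldsymbol\mu=d}\mathcal G_{\boldsymbol\mu}.
$$
From this, a supremum over a union equals the supremum of the suprema, and the empty cases agree automatically.

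For ``$\supseteq$'': if $\boldsymbol p\in\mathcal G_{\boldsymbol\mu}$ with $\boldsymbol a^\top\boldsymbol\mu=d$, apply $\boldsymbol a^\top$ to the constraint. This gives $\sum_ip_i(Z_i-d)=0$, so $\boldsymbol p\in\mathcal F_d$.

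For ``$\subseteq$'': given $\boldsymbol p\in\mathcal F_d$, set $\boldsymbol\mu=\sum_ip_i\boldsymbol X_i$. This is the weighted mean of the data, and it is a legitimate candidate because $\boldsymbol\mu$ ranges over all of $\R^2$. Then $\boldsymbol p\in\mathcal G_{\boldsymbol\mu}$ trivially, and $\boldsymbol a^\top\boldsymbol\mu=\sum_ip_iZ_i=d$.

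Hence $\sup_{\boldsymbol a^\top\boldsymbol\mu=d}L_n(\boldsymbol\mu)=\sup_{\mathcal F_d}\prod(np_i)=L_{Z,n}(d)$. The only point that needs care is that the inner supremum may be $0$ (an empty constraint set), or may not be attained on the relative boundary of the simplex. Working with suprema of the set functions, rather than with the Owen closed form for $p_i(\boldsymbol\mu)$, avoids any attainment or convex-hull-interior issue.

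The consequences then follow immediately. Taking $d=0$ gives $\sup_HL_n=L_{Z,n}(0)$, since $H=\{\boldsymbol a^\top\boldsymbol\mu=0\}$. For $\Theta$, write $\Theta=\bigcup_{d\ge0}\{\boldsymbol a^\top\boldsymbol\mu=d\}$ and iterate suprema:
$$
\sup_{\Theta}L_n=\sup_{d\ge0}\ \sup_{\boldsymbol a^\top\boldsymbol\mu=d}L_n(\boldsymbol\mu)=\sup_{d\ge0}L_{Z,n}(d).
$$
I expect no real obstacle. The main thing to state explicitly is the $\sup\emptyset=0$ convention, so that the identity holds even when $d$ lies outside $[\min Z_i,\max Z_i]$, where both sides vanish.
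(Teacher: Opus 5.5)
Your proof is correct and is essentially the paper's argument. The paper also uses the map $\boldsymbol p\mapsto\sum_i p_i\boldsymbol X_i$ with $\boldsymbol a^\top\boldsymbol\mu(\boldsymbol p)=\sum_i p_iZ_i$ to identify the two sets of feasible weights, and then takes $d=0$ or the supremum over $d\geq0$; you simply make the two inclusions and the $\sup\emptyset=0$ convention explicit where the paper leaves them implicit.
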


\begin{proof}
For any $\boldsymbol{p}\in\mathcal P_n$, put $\boldsymbol{\mu}(\boldsymbol{p})=\sum_i p_i\boldsymbol{X}_i$.  Then
$$
  \boldsymbol{a}^\top\boldsymbol{\mu}(\boldsymbol{p})=\sum_i p_iZ_i.
$$
Thus maximizing over mean vectors with $\boldsymbol{a}^\top\boldsymbol{\mu}=d$ is exactly maximizing over weights with $\sum_i p_iZ_i=d$.  Taking $d=0$ or maximizing over $d\geq0$ gives the remaining claims.
\end{proof}

This finite-sample identity is the bivariate specialization underlying the ordered-mean example of \citet{el1996empirical}. It also gives a stable computational formula. Because $d\mapsto\log L_{Z,n}(d)$ is concave on its effective domain and is maximized at $\bar Z$, its maximum over $d\geq0$ occurs at $d=0$ when $\bar Z<0$ and $L_{Z,n}(0)>0$. If $\bar Z\geq0$, uniform weights are feasible for $\Theta$, so $\sup_{\Theta}L_n=1$. If $\bar Z<0$ and $L_{Z,n}(0)>0$, the order constraint binds and
$$
  -2\log\sup_{\boldsymbol{\mu}\in\Theta}L_n(\boldsymbol{\mu})=\ell_{Z,n}(0).
$$
Therefore, on the event $\{\bar Z\geq0\}\cup\{L_{Z,n}(0)>0\}$,
$$
  \ell_{\Theta,n}(\boldsymbol{\mu})
  =
  \ell_n(\boldsymbol{\mu})
  -
  \mathbf 1\{\bar Z<0\}\ell_{Z,n}(0).
$$
Under the boundary assumptions below, this event has probability tending to one. When $\bar Z<0$, the constrained denominator is positive if and only if $\min_i Z_i<0<\max_i Z_i$; if $\max_i Z_i\leq0$ and at least one $Z_i<0$, it is zero, so the likelihood ratio is undefined and this convex-hull infeasibility must be recorded separately.

\subsection{Fixed interior point}

We first record the interior benchmark, because it identifies the regime in which the order restriction is asymptotically inactive and ordinary Wilks calibration remains valid.

\begin{theorem}[Fixed interior mean vector]\label{thm:interior}
Suppose $\E(\boldsymbol{X}_i)=\boldsymbol{\mu}_0$, $\E\|\boldsymbol{X}_i\|^2<\infty$, $\Sigma$ is positive definite, and $\boldsymbol{\mu}_0\in\interior(\Theta)$.  Then
$$
  \ell_{\Theta,n}(\boldsymbol{\mu}_0)\ \xrightarrow{d}\ \chi^2_2.
$$
\end{theorem}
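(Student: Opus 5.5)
The plan is to show that, with probability tending to one, the constrained statistic coincides with the ordinary empirical log-likelihood ratio, and then to invoke Owen's nonparametric Wilks theorem. The starting point is the decomposition derived after Lemma~\ref{lem:profile}: on the event $\{\bar Z\geq0\}\cup\{L_{Z,n}(0)>0\}$ we have
$$
\ell_{\Theta,n}(\boldsymbol{\mu}_0)=\ell_n(\boldsymbol{\mu}_0)-\mathbf 1\{\bar Z<0\}\,\ell_{Z,n}(0).
$$
In particular, on $\{\bar Z\geq0\}$ uniform weights are feasible for $\Theta$, so $\sup_\Theta L_n=1$ and $\ell_{\Theta,n}(\boldsymbol{\mu}_0)=\ell_n(\boldsymbol{\mu}_0)$ exactly.

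The first step is to show $\Pp(\bar Z\geq 0)\to1$. Since $\boldsymbol{\mu}_0\in\interior(\Theta)$, we have $\delta:=\boldsymbol{a}^\top\boldsymbol{\mu}_0=\mu_{02}-\mu_{01}>0$. The $Z_i$ are i.i.d.\ with finite mean $\delta$, so the weak law of large numbers gives $\bar Z\to\delta$ in probability. Hence $\Pp(\bar Z<0)\le\Pp(|\bar Z-\delta|>\delta)\to0$. On the complementary event $\mathcal D_{\Theta,n}$ holds automatically, because $L_n(\bar{\boldsymbol X})=1$. Consequently $\ell_{\Theta,n}(\boldsymbol{\mu}_0)-\ell_n(\boldsymbol{\mu}_0)$ is nonzero only on an event of vanishing probability, and so it is $o_p(1)$. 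This holds whatever convention is used on $\mathcal D^c_{\Theta,n}$, including the zero extension adopted in the paper.

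The second step is the unconstrained limit $\ell_n(\boldsymbol{\mu}_0)\xrightarrow{d}\chi^2_2$. This is Owen's theorem \citep{owen1990,owen2001} under $\E\|\boldsymbol X_i\|^2<\infty$ and positive definite $\Sigma$. I would cite it rather than reprove it, recalling its standard ingredients:
\begin{itemize}[nosep]
\item $\boldsymbol{\mu}_0\in\interior\conv(\boldsymbol X_1,\ldots,\boldsymbol X_n)$ with probability tending to one;
\item $\max_i\|\boldsymbol X_i-\boldsymbol{\mu}_0\|=o_p(n^{1/2})$;
\item $\boldsymbol\lambda(\boldsymbol{\mu}_0)=O_p(n^{-1/2})$, obtained by the usual bound from the estimating equation;
\item the quadratic expansion $\ell_n(\boldsymbol{\mu}_0)=n(\bar{\boldsymbol X}-\boldsymbol{\mu}_0)^\top S^{-1}(\bar{\boldsymbol X}-\boldsymbol{\mu}_0)+o_p(1)$, where $S$ is the sample covariance about $\boldsymbol{\mu}_0$;
\item the multivariate CLT combined with $S\to\Sigma$ in probability.
\end{itemize}
Slutsky's lemma then transfers the limit to $\ell_{\Theta,n}(\boldsymbol{\mu}_0)$.

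There is no substantive obstacle here; the only delicate point is bookkeeping. One must confirm that the undefined or infeasible cases, namely $\mathcal D_{\Theta,n}^c$ and $\ell_n(\boldsymbol{\mu}_0)=\infty$ when $\boldsymbol{\mu}_0\notin\conv$, lie inside events of probability tending to zero. For the first, $\mathcal D_{\Theta,n}^c\subset\{\bar Z<0\}$. For the second, the probability vanishes by Owen's convex-hull lemma. With both confirmed, the difference between $\ell_{\Theta,n}(\boldsymbol{\mu}_0)$ and $\ell_n(\boldsymbol{\mu}_0)$ is asymptotically negligible in probability.
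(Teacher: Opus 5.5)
Your proposal is correct and follows the same argument as the paper. Both show by the law of large numbers that $\Pp(\bar Z\geq 0)=\Pp(\bar{\boldsymbol X}\in\Theta)\to1$, observe that on this event $\ell_{\Theta,n}(\boldsymbol{\mu}_0)=\ell_n(\boldsymbol{\mu}_0)$ exactly, and then invoke Owen's multivariate Wilks theorem. Your extra bookkeeping for $\mathcal D_{\Theta,n}^c$ and the convex-hull event fills in details the paper leaves implicit.
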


\begin{proof}
If $\boldsymbol{a}^\top\boldsymbol{\mu}_0>0$, the law of large numbers gives $\Pp(\bar{\boldsymbol{X}}\in\Theta)\to1$.  On this event, the unconstrained EL maximizer $\bar{\boldsymbol{X}}$ is feasible, and $\ell_{\Theta,n}(\boldsymbol{\mu}_0)=\ell_n(\boldsymbol{\mu}_0)$.  The conclusion follows from the ordinary multivariate EL Wilks theorem \citet{owen1990,owen2001}.
\end{proof}

\subsection{Fixed boundary point}\label{sec:boundary}

Let $\boldsymbol{\mu}_0=(m,m)^\top\in H$ and define
$$
  \boldsymbol{\Delta}_n=\sqrt n(\bar{\boldsymbol{X}}-\boldsymbol{\mu}_0),
  \qquad
  K=\{\boldsymbol{h}\in\R^2:\boldsymbol{a}^\top \boldsymbol{h}\geq0\}.
$$
Thus, the statement that the restriction is asymptotically active has the following precise meaning:
$$
  \sqrt n\,\boldsymbol{a}^\top(\bar{\boldsymbol{X}}-\boldsymbol{\mu}_0)
  \ \xrightarrow{d}\
  N(0,\boldsymbol{a}^\top\Sigma\boldsymbol{a}),
  \qquad
  \Pp(\bar{\boldsymbol{X}}\notin\Theta)\to\tfrac12.
$$
Thus the unconstrained EL maximizer is infeasible with a nonvanishing limiting probability, and the constrained maximizer then lies on $H$.

We use the following assumptions.
\begin{enumerate}[label=(B\arabic*),leftmargin=*,itemsep=2pt]
\item $\boldsymbol{X}_1,\ldots,\boldsymbol{X}_n$ are i.i.d.\ with common distribution $F$ and mean $\boldsymbol{\mu}_0$.
\item $\Sigma=\operatorname{Var}(\boldsymbol{X}_i)$ is positive definite.
\item $\E\|\boldsymbol{X}_i-\boldsymbol{\mu}_0\|^3<\infty$.
\item For every fixed $M<\infty$,
$$
    \Pp\left[
    \boldsymbol{\mu}_0+n^{-1/2}\{\boldsymbol{h}:\|\boldsymbol{h}\|\leq M\}
    \subset\interior\conv(\boldsymbol{X}_1,\ldots,\boldsymbol{X}_n)
    \right]\to1.
$$
\end{enumerate}
Assumption (B4) is the local convex-hull feasibility condition needed to ensure that the empirical-likelihood multiplier exists uniformly on root-$n$ neighborhoods. In the present finite-dimensional mean problem, it follows from (B1)--(B2): if a supporting hyperplane through $\boldsymbol{\mu}_0$ existed, then some nonzero $\boldsymbol{v}$ would make $\boldsymbol{v}^{\top}(\boldsymbol{X}_i-\boldsymbol{\mu}_0)$ one-signed with mean zero, hence zero almost surely, contradicting $\boldsymbol{v}^{\top}\Sigma\boldsymbol{v}>0$; thus $\boldsymbol{\mu}_0\in\interior\conv(\operatorname{supp}F)$, and finitely many support neighborhoods whose convex hull contains a neighborhood of $\boldsymbol{\mu}_0$ are all hit by the sample with probability tending to one. We nevertheless state (B4) explicitly to make the geometric feasibility step used in the uniform expansion transparent (\citet{owen1990, owen2001}). \\


\noindent To prove the main theorem, we first establish the following preliminary lemmas.
\begin{lemma}[Uniform local quadratic expansion]\label{lem:quadratic}
Under (B1)--(B4), for every fixed $M<\infty$,
$$
  \sup_{\|\boldsymbol{h}\|\leq M}
  \left|
  \ell_n\left(\boldsymbol{\mu}_0+\frac{\boldsymbol{h}}{\sqrt n}\right)
  -
  (\boldsymbol{\Delta}_n-\boldsymbol{h})^\top\Sigma^{-1}(\boldsymbol{\Delta}_n-\boldsymbol{h})
  \right|
  \xrightarrow{\Pp}0.
$$
\end{lemma}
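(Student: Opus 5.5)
The plan is to follow Owen's standard argument for the empirical-likelihood Wilks theorem, with every bound made uniform over the ball $\|\boldsymbol{h}\|\leq M$.

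Fix $M$ and write $\boldsymbol{\mu}_h=\boldsymbol{\mu}_0+\boldsymbol{h}/\sqrt n$ and $\boldsymbol{Y}_i(\boldsymbol h)=\boldsymbol{X}_i-\boldsymbol{\mu}_h=\boldsymbol{Y}_i-\boldsymbol h/\sqrt n$, where $\boldsymbol Y_i=\boldsymbol X_i-\boldsymbol\mu_0$. By (B4), with probability tending to one every $\boldsymbol\mu_h$ lies in $\interior\conv(\boldsymbol X_1,\ldots,\boldsymbol X_n)$. So the multiplier $\boldsymbol\lambda_h=\boldsymbol\lambda(\boldsymbol\mu_h)$ exists simultaneously for all $\|\boldsymbol h\|\leq M$, and we work on that event.

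We first collect uniform ingredients, all of which are free because $\boldsymbol h$ enters only through the shift $\boldsymbol h/\sqrt n$:
\begin{itemize}[leftmargin=*]
\item $\max_i\|\boldsymbol Y_i\|=o_p(n^{1/2})$. Indeed (B3), or even the second moment alone, gives $\max_i\|\boldsymbol Y_i\|=o_p(n^{1/3})$.
\item Hence $\max_i\sup_h\|\boldsymbol Y_i(\boldsymbol h)\|=o_p(n^{1/2})$.
\item $S_h=n^{-1}\sum_i\boldsymbol Y_i(\boldsymbol h)\boldsymbol Y_i(\boldsymbol h)^\top=\Sigma+o_p(1)$ uniformly in $h$, since $S_h-S_0$ is bounded by $O(M\|\bar{\boldsymbol Y}\|/\sqrt n+M^2/n)$.
\item $\bar{\boldsymbol Y}(\boldsymbol h)=n^{-1/2}(\boldsymbol\Delta_n-\boldsymbol h)$, so $\sup_h\|\bar{\boldsymbol Y}(\boldsymbol h)\|=O_p(n^{-1/2})$ by the CLT.
\end{itemize}

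Next comes the multiplier bound, which I expect to be the main obstacle because it must hold uniformly. I will use Owen's trick for each $\boldsymbol h$. Write $\boldsymbol\lambda_h=\rho_h\boldsymbol\theta_h$ with $\|\boldsymbol\theta_h\|=1$. Multiplying the estimating equation by $\boldsymbol\theta_h$ and using $1/(1+x)=1-x/(1+x)$ gives
\[
\rho_h\,\boldsymbol\theta_h^\top S_h\boldsymbol\theta_h\le \bigl\{1+\rho_h\max_i\|\boldsymbol Y_i(\boldsymbol h)\|\bigr\}\,\bigl|\boldsymbol\theta_h^\top\bar{\boldsymbol Y}(\boldsymbol h)\bigr|.
\]
The smallest eigenvalue of $S_h$ is bounded below uniformly with probability tending to one, because $\Sigma$ is positive definite and the second ingredient above holds. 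The deterministic inequality therefore yields
\[
\sup_h\|\boldsymbol\lambda_h\|=O_p(n^{-1/2}),
\]
and consequently $\sup_{h,i}|\boldsymbol\lambda_h^\top\boldsymbol Y_i(\boldsymbol h)|=o_p(1)$. The point to check carefully is that each step is a pointwise algebraic inequality whose random inputs are controlled by suprema. No chaining or empirical-process argument is then needed.

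With the multiplier under control, expand the estimating equation:
\[
\boldsymbol 0=\bar{\boldsymbol Y}(\boldsymbol h)-S_h\boldsymbol\lambda_h+\boldsymbol r_h,\qquad \|\boldsymbol r_h\|\le n^{-1}\sum_i\|\boldsymbol Y_i(\boldsymbol h)\|^3\|\boldsymbol\lambda_h\|^2/(1+\boldsymbol\lambda_h^\top \boldsymbol Y_i(\boldsymbol h)).
\]
Using (B3), $n^{-1}\sum_i\|\boldsymbol Y_i(\boldsymbol h)\|^3=O_p(1)$ uniformly, so $\sup_h\|\boldsymbol r_h\|=O_p(n^{-1})=o_p(n^{-1/2})$. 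Hence
\[
\boldsymbol\lambda_h=S_h^{-1}\bar{\boldsymbol Y}(\boldsymbol h)+o_p(n^{-1/2})
\]
uniformly in $\boldsymbol h$.

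Finally, expand $2\sum_i\log(1+x_i)=2\sum_i x_i-\sum_i x_i^2+R$ with $x_i=\boldsymbol\lambda_h^\top\boldsymbol Y_i(\boldsymbol h)$. The remainder satisfies $|R|\le C\sum_i|x_i|^3\le C\|\boldsymbol\lambda_h\|^3\sum_i\|\boldsymbol Y_i(\boldsymbol h)\|^3=O_p(n^{-1/2})$ uniformly. Substituting the expansion of $\boldsymbol\lambda_h$ gives
\[
\ell_n(\boldsymbol\mu_h)=n\,\bar{\boldsymbol Y}(\boldsymbol h)^\top S_h^{-1}\bar{\boldsymbol Y}(\boldsymbol h)+o_p(1)=(\boldsymbol\Delta_n-\boldsymbol h)^\top S_h^{-1}(\boldsymbol\Delta_n-\boldsymbol h)+o_p(1),
\]
with the $o_p(1)$ uniform in $\boldsymbol h$. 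Replacing $S_h^{-1}$ by $\Sigma^{-1}$ costs $\|\boldsymbol\Delta_n-\boldsymbol h\|^2\,\|S_h^{-1}-\Sigma^{-1}\|=O_p(1)\,o_p(1)$ uniformly, because $\|\boldsymbol\Delta_n-\boldsymbol h\|\le\|\boldsymbol\Delta_n\|+M$. This proves the lemma.
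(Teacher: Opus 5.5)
Your proposal is correct and follows essentially the same route as the paper's proof: Owen's multiplier argument, with each ingredient made uniform over $\|\boldsymbol h\|\leq M$ through the explicit $\boldsymbol h/\sqrt n$ shift and with (B4) supplying the multipliers. The uniform ingredients are the sample covariance, the maximum norm, the third moments, the $O_p(n^{-1/2})$ multiplier bound, the linearized estimating equation and the third-order log expansion; the only differences are cosmetic (you keep $S_h^{-1}$ until the last step, whereas the paper substitutes $\Sigma^{-1}$ in the multiplier expansion and uses $\sum_i t_{ni}=\sum_i t_{ni}^2+o_p(1)$), and one small slip is that the second moment alone gives $\max_i\|\boldsymbol Y_i\|=o_p(n^{1/2})$, not $o_p(n^{1/3})$, though $o_p(n^{1/2})$ is all your argument needs there.
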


Because the preceding expansion is only local, the next result is required to show that the constrained and equality-profile maximizers lie in the root-$n$ neighborhood on which that expansion is valid.

\begin{lemma}[Localization]\label{lem:localization}
Under (B1)--(B4), select the following maximizers on $\mathcal D_{\Theta,n}$ and $\mathcal D_{H,n}$, respectively, and assign them the arbitrary value $\boldsymbol{\mu}_0$ on the corresponding complements:
$$
  \widehat{\boldsymbol{\mu}}_\Theta\in\argmax_{\boldsymbol{\mu}\in\Theta}L_n(\boldsymbol{\mu}),
  \qquad
  \widehat{\boldsymbol{\mu}}_H\in\argmax_{\boldsymbol{\mu}\in H}L_n(\boldsymbol{\mu}).
$$
Then
$$
  \widehat{\boldsymbol{\mu}}_\Theta-\boldsymbol{\mu}_0=O_{p}(n^{-1/2}),
  \qquad
  \widehat{\boldsymbol{\mu}}_H-\boldsymbol{\mu}_0=O_{p}(n^{-1/2}).
$$
\end{lemma}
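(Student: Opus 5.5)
The plan is to use the exact profile identity of Lemma~\ref{lem:profile} to turn both maximizers into explicit reweightings of the sample by \emph{scalar} empirical-likelihood weights for the paired differences $Z_i$. Owen's scalar multiplier bound then gives the rate directly. Lemma~\ref{lem:quadratic} is not needed here, so no circularity arises.

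\emph{Step 1: feasibility.} By (B4) with any $M>0$, $\boldsymbol{\mu}_0$ lies in $\interior\conv(\boldsymbol{X}_1,\ldots,\boldsymbol{X}_n)$ with probability tending to one. Projecting by $\boldsymbol{a}^\top$ and using $\boldsymbol{a}^\top\boldsymbol{\mu}_0=0$ then gives $\min_i Z_i<0<\max_i Z_i$. Hence $\Pp(\mathcal D_{H,n}\cap\mathcal D_{\Theta,n})\to1$, and the arbitrary definitions on the complements are irrelevant for $O_p$ statements.

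\emph{Step 2: identification of $\widehat{\boldsymbol{\mu}}_H$.} On $\mathcal D_{H,n}$, the scalar problem defining $L_{Z,n}(0)$ has a unique maximizing weight vector, by strict concavity of $\sum\log p_i$:
$$
p^Z_i=\frac{1}{n(1+tZ_i)},\qquad \sum_i\frac{Z_i}{1+tZ_i}=0 .
$$
By the proof of Lemma~\ref{lem:profile}, any $\boldsymbol{\mu}\in H$ with $L_n(\boldsymbol{\mu})=L_{Z,n}(0)$ is of the form $\sum_ip_i\boldsymbol{X}_i$ with $\boldsymbol{p}$ attaining the scalar supremum. Therefore $\boldsymbol{p}=\boldsymbol{p}^Z$ and $\widehat{\boldsymbol{\mu}}_H=\sum_ip^Z_i\boldsymbol{X}_i$. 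Since $\sum_i p_i^Z=1$, we can center and write
$$
\widehat{\boldsymbol{\mu}}_H-\bar{\boldsymbol{X}}
=-\frac{t}{n}\sum_{i=1}^n\frac{Z_i(\boldsymbol{X}_i-\boldsymbol{\mu}_0)}{1+tZ_i}.
$$

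\emph{Step 3: the multiplier bound (main obstacle).} I expect this to be the only nonroutine step: showing $t=O_p(n^{-1/2})$ and $|t|\max_i|Z_i|=o_p(1)$. I would follow Owen's standard argument.
\begin{itemize}[leftmargin=*,itemsep=2pt]
\item Under a finite second moment, $\max_i|Z_i|=o_p(n^{1/2})$.
\item Write $S=n^{-1}\sum Z_i^2\to\boldsymbol{a}^\top\Sigma\boldsymbol{a}>0$, which is positive by (B2).
\item Manipulating the estimating equation gives $|t|\,S\le|\bar Z|\,(1+|t|\max_i|Z_i|)$.
\item Since $\bar Z=O_p(n^{-1/2})$ under the boundary mean, this yields $|t|\{S-|\bar Z|\max_i|Z_i|\}\le|\bar Z|$, and $|\bar Z|\max_i|Z_i|=o_p(1)$.
\end{itemize}
Hence $t=O_p(n^{-1/2})$ and $\min_i(1+tZ_i)\ge1-|t|\max_i|Z_i|\to_p1$.

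Plugging this into Step 2 gives
$$
\|\widehat{\boldsymbol{\mu}}_H-\bar{\boldsymbol{X}}\|\le|t|\,\frac{n^{-1}\sum_i|Z_i|\,\|\boldsymbol{X}_i-\boldsymbol{\mu}_0\|}{\min_i(1+tZ_i)}=O_p(n^{-1/2}),
$$
using the law of large numbers and Cauchy--Schwarz for the average. Combined with $\bar{\boldsymbol{X}}-\boldsymbol{\mu}_0=O_p(n^{-1/2})$ from the CLT, this proves the claim for $\widehat{\boldsymbol{\mu}}_H$.

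\emph{Step 4: $\widehat{\boldsymbol{\mu}}_\Theta$ by cases.}
\begin{itemize}[leftmargin=*,itemsep=2pt]
\item On $\{\bar Z\ge0\}$: $\sup_\Theta L_n=1$, and $L_n(\boldsymbol{\mu})=1$ only for $\boldsymbol{\mu}=\bar{\boldsymbol{X}}$, since uniform weights are the unique maximizer of $\prod np_i$. So $\widehat{\boldsymbol{\mu}}_\Theta=\bar{\boldsymbol{X}}$.
\item On $\{\bar Z<0\}\cap\mathcal D_{H,n}$: concavity of $\log L_{Z,n}$ with maximum at $\bar Z$ makes it strictly decreasing on $[\bar Z,\infty)\cap$ its domain. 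Thus the supremum over $d\ge0$ is attained only at $d=0$, so $\widehat{\boldsymbol{\mu}}_\Theta\in H$ and attains $L_{Z,n}(0)$. By Step 2, $\widehat{\boldsymbol{\mu}}_\Theta=\widehat{\boldsymbol{\mu}}_H$.
\end{itemize}
In either case $\|\widehat{\boldsymbol{\mu}}_\Theta-\boldsymbol{\mu}_0\|\le\|\bar{\boldsymbol{X}}-\boldsymbol{\mu}_0\|+\|\widehat{\boldsymbol{\mu}}_H-\boldsymbol{\mu}_0\|=O_p(n^{-1/2})$.
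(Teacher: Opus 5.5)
Your proposal is correct and follows essentially the same route as the paper. The profile identity reduces both maximizers to reweightings of the sample by the scalar empirical-likelihood weights $1/\{n(1+tZ_i)\}$, Owen's multiplier bound gives $t=O_p(n^{-1/2})$ and $\max_i|tZ_i|=o_p(1)$, and a case split on the sign of $\bar Z$ handles $\widehat{\boldsymbol{\mu}}_\Theta$. Your version is slightly more careful than the paper's in two places: it uses uniqueness of the optimal weights to justify that the maximizer actually equals this weighted mean, and it observes that $\widehat{\boldsymbol{\mu}}_\Theta=\widehat{\boldsymbol{\mu}}_H$ on the binding event instead of repeating the calculation.
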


The proofs are given in Appendix A. 

Combining uniform expansion with localization converts the constrained empirical-likelihood ratio into a metric-projection problem, which is the form needed to identify its boundary limit.

\begin{lemma}[Quadratic projection reduction]\label{lem:reduction}
Under (B1)--(B4),
\begin{equation}\label{eq:projection-reduction}
  \ell_{\Theta,n}(\boldsymbol{\mu}_0)
  =
  \boldsymbol{\Delta}_n^\top\Sigma^{-1}\boldsymbol{\Delta}_n
  -
  \inf_{\boldsymbol{h}\in K}
  (\boldsymbol{\Delta}_n-\boldsymbol{h})^\top\Sigma^{-1}(\boldsymbol{\Delta}_n-\boldsymbol{h})
  +o_{p}(1).
\end{equation}
\end{lemma}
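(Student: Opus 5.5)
The plan is to write $\ell_{\Theta,n}(\boldsymbol{\mu}_0)=\ell_n(\boldsymbol{\mu}_0)-\inf_{\boldsymbol{\nu}\in\Theta}\ell_n(\boldsymbol{\nu})$ as in \eqref{eq:elltheta} and treat the two terms separately. For the first term, I take $\boldsymbol{h}=\boldsymbol{0}$ in Lemma~\ref{lem:quadratic}, which gives $\ell_n(\boldsymbol{\mu}_0)=\boldsymbol{\Delta}_n^\top\Sigma^{-1}\boldsymbol{\Delta}_n+o_p(1)$. For the infimum, I reparametrize $\boldsymbol{\nu}=\boldsymbol{\mu}_0+\boldsymbol{h}/\sqrt n$. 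Because $\boldsymbol{a}^\top\boldsymbol{\mu}_0=0$, the constraint $\boldsymbol{\nu}\in\Theta$ is equivalent to $\boldsymbol{h}\in K$. Hence
$$
\inf_{\boldsymbol{\nu}\in\Theta}\ell_n(\boldsymbol{\nu})=\inf_{\boldsymbol{h}\in K}\ell_n\!\left(\boldsymbol{\mu}_0+\tfrac{\boldsymbol{h}}{\sqrt n}\right),
$$
and I need to show that this equals $Q_n:=\inf_{\boldsymbol{h}\in K}(\boldsymbol{\Delta}_n-\boldsymbol{h})^\top\Sigma^{-1}(\boldsymbol{\Delta}_n-\boldsymbol{h})+o_p(1)$.

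The key step is to restrict both infima to a common ball $\{\|\boldsymbol{h}\|\le M\}$. Fix $\varepsilon>0$. By Lemma~\ref{lem:localization}, I can choose $M_1$ such that, with probability at least $1-\varepsilon$ for large $n$, the event $\mathcal D_{\Theta,n}$ holds and $\widehat{\boldsymbol{h}}_\Theta=\sqrt n(\widehat{\boldsymbol{\mu}}_\Theta-\boldsymbol{\mu}_0)$ satisfies $\|\widehat{\boldsymbol{h}}_\Theta\|\le M_1$. (By (B4), $\mathcal D_{\Theta,n}$ has probability tending to one.) On this event the empirical-likelihood infimum over $K$ equals the infimum over $K\cap\{\|\boldsymbol{h}\|\le M\}$ for every $M\ge M_1$.

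For the quadratic, the minimizer over $K$ is the $\Sigma^{-1}$-metric projection $\Pi_K(\boldsymbol{\Delta}_n)$. Projection onto a closed convex cone containing $\boldsymbol{0}$ is nonexpansive in that norm, so $\|\Pi_K(\boldsymbol{\Delta}_n)\|\le c\|\boldsymbol{\Delta}_n\|$ for a constant $c$ depending only on $\Sigma$. The CLT gives $\boldsymbol{\Delta}_n=O_p(1)$ under (B1)--(B3), so I can choose $M_2$ with $\|\Pi_K(\boldsymbol{\Delta}_n)\|\le M_2$ with probability at least $1-\varepsilon$. Taking $M=\max(M_1,M_2)$, both infima are attained on $K_M=K\cap\{\|\boldsymbol{h}\|\le M\}$ with probability at least $1-2\varepsilon$.

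On $K_M$, Lemma~\ref{lem:quadratic} gives a uniform error $\delta_n=o_p(1)$. The elementary inequality $|\inf f-\inf g|\le\sup|f-g|$ over $K_M$ then yields
$$
\left|\inf_{K}\ell_n\!\left(\boldsymbol{\mu}_0+\tfrac{\boldsymbol{h}}{\sqrt n}\right)-Q_n\right|\le\delta_n
$$
on that event. Since $\varepsilon$ is arbitrary, the difference is $o_p(1)$. Substituting this and the first-term expansion back into \eqref{eq:elltheta} gives \eqref{eq:projection-reduction}. The arbitrary extension on $\mathcal D_{\Theta,n}^c$ affects only an event of vanishing probability.

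The main obstacle is that the empirical-likelihood infimum over the unbounded cone $K$ could in principle be approached far outside the region where Lemma~\ref{lem:quadratic} applies. Localization rules this out, but only through an attained maximizer. If attainment were an issue, I would fall back on near-minimizers: I would use the profile identity of Lemma~\ref{lem:profile} to see that $\sup_\Theta L_n$ is attained either at $\bar{\boldsymbol{X}}$ or on $H$ at the maximizer of $L_n$ over $\{\boldsymbol{a}^\top\boldsymbol{\mu}=0\}$, and then apply the same $\varepsilon$-$M$ argument.
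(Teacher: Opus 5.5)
Your proposal is correct and follows essentially the same route as the paper. Both arguments localize the constrained EL maximizer via Lemma~\ref{lem:localization} and localize the quadratic minimizer; you do the latter by nonexpansiveness of the $\Sigma^{-1}$-metric projection, while the paper compares with $\boldsymbol{h}=\boldsymbol{0}\in K$. Both then compare the two infima on a common ball $K\cap\{\|\boldsymbol{h}\|\le M\}$ using the uniform expansion of Lemma~\ref{lem:quadratic}.

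Your fallback to near-minimizers is not needed. The profile identity already shows that, on the feasibility event, the constrained maximum is attained: at $\bar{\boldsymbol{X}}$ when $\bar Z\ge0$, and at the EL-weighted mean on $H$ otherwise.
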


The projection representation now yields the correct calibration for a completely specified mean vector on the equality boundary, where the order constraint is active with nonvanishing probability (\citet{el1996empirical}). 

\begin{theorem}[Fixed boundary point]\label{thm:fixed-boundary}
Under (B1)--(B4),
$$
  \ell_{\Theta,n}(\boldsymbol{\mu}_0)
  \ \xrightarrow{d}\
  \barchi{1}{2},
$$
where $\barchi{1}{2}$ denotes the mixture distribution with cdf
$$
  \Pp(\barchi{1}{2}\leq x)
  =
  \tfrac12\Pp(\chi^2_1\leq x)
  +
  \tfrac12\Pp(\chi^2_2\leq x),
  \qquad x\geq0.
$$
\end{theorem}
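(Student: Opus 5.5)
The plan is to start from the quadratic projection reduction in Lemma~\ref{lem:reduction} and identify the limit of its right-hand side by the continuous mapping theorem. By (B1)--(B2) and the multivariate central limit theorem, $\boldsymbol{\Delta}_n\xrightarrow{d}\boldsymbol{\Delta}\sim N(\boldsymbol{0},\Sigma)$. The map
$$
g(\boldsymbol{\delta})=\boldsymbol{\delta}^\top\Sigma^{-1}\boldsymbol{\delta}-\inf_{\boldsymbol{h}\in K}(\boldsymbol{\delta}-\boldsymbol{h})^\top\Sigma^{-1}(\boldsymbol{\delta}-\boldsymbol{h})
$$
is continuous, since the distance to a closed convex set in the $\Sigma^{-1}$-norm is Lipschitz. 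Lemma~\ref{lem:reduction} together with Slutsky's lemma therefore gives $\ell_{\Theta,n}(\boldsymbol{\mu}_0)\xrightarrow{d}g(\boldsymbol{\Delta})$. The arbitrary extension on $\mathcal D_{\Theta,n}^c$ is harmless, because (B4) makes that event asymptotically negligible.

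Next I compute $g$ explicitly in the half-space case. Write $\|\cdot\|_{\Sigma^{-1}}$ for the $\Sigma^{-1}$-norm and set $\sigma_a^2=\boldsymbol{a}^\top\Sigma\boldsymbol{a}>0$.
\begin{itemize}
\item If $\boldsymbol{a}^\top\boldsymbol{\delta}\ge0$, then $\boldsymbol{\delta}\in K$, the infimum is $0$, and $g(\boldsymbol{\delta})=\boldsymbol{\delta}^\top\Sigma^{-1}\boldsymbol{\delta}$.
\item If $\boldsymbol{a}^\top\boldsymbol{\delta}<0$, the $\Sigma^{-1}$-projection onto $K$ lies on $H$ and equals $\boldsymbol{\delta}-\Sigma\boldsymbol{a}(\boldsymbol{a}^\top\boldsymbol{\delta})/\sigma_a^2$. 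This follows from a Lagrange computation minimizing $(\boldsymbol{\delta}-\boldsymbol{h})^\top\Sigma^{-1}(\boldsymbol{\delta}-\boldsymbol{h})$ subject to $\boldsymbol{a}^\top\boldsymbol{h}=0$. The squared distance is then $(\boldsymbol{a}^\top\boldsymbol{\delta})^2/\sigma_a^2$, so $g(\boldsymbol{\delta})=\boldsymbol{\delta}^\top\Sigma^{-1}\boldsymbol{\delta}-(\boldsymbol{a}^\top\boldsymbol{\delta})^2/\sigma_a^2$.
\end{itemize}

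To identify the law, I whiten by putting $\boldsymbol{W}=\Sigma^{-1/2}\boldsymbol{\Delta}\sim N(\boldsymbol{0},I_2)$ and $\boldsymbol{u}=\Sigma^{1/2}\boldsymbol{a}/\sigma_a$, a unit vector. Then $\boldsymbol{a}^\top\boldsymbol{\Delta}/\sigma_a=\boldsymbol{u}^\top\boldsymbol{W}=:U$. Complete $\boldsymbol{u}$ to an orthonormal basis with $\boldsymbol{u}_\perp$ and put $V=\boldsymbol{u}_\perp^\top\boldsymbol{W}$, so that $U,V$ are i.i.d. $N(0,1)$. With this notation,
$$
g(\boldsymbol{\Delta})=(U^2+V^2)\mathbf 1\{U\ge0\}+V^2\mathbf 1\{U<0\}.
$$
By symmetry of $U$ and independence of $\operatorname{sign}(U)$, $U^2$ and $V$, for $x\ge0$ we get $\Pp(g\le x)=\tfrac12\Pp(U^2+V^2\le x)+\tfrac12\Pp(V^2\le x)$. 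This is exactly $\tfrac12\Pp(\chi^2_2\le x)+\tfrac12\Pp(\chi^2_1\le x)$, the law of $\barchi{1}{2}$.

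The main obstacle is already absorbed into Lemma~\ref{lem:reduction}, which rests on the localization of Lemma~\ref{lem:localization}. Given those, the only delicate points are the projection formula and the independence of $\operatorname{sign}(U)$ from $(U^2,V)$. The projection formula needs the convexity of $K$ and the fact that the minimizer lies on $H$ when $\boldsymbol{a}^\top\boldsymbol{\delta}<0$; this holds because for a half-space the projection of an exterior point lands on the bounding hyperplane. The independence claim is a standard Gaussian symmetry fact. I would also note that the limit cdf is continuous for $x>0$, so convergence holds at every $x\ge0$.
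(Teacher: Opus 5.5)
Your proposal is correct and follows the paper's proof essentially step for step. Both arguments reduce via Lemma~\ref{lem:reduction} with the CLT, continuous mapping and Slutsky, compute the explicit $\Sigma^{-1}$-projection onto the half-space $K$, and whiten to independent $U,V$ to read off the $\tfrac12\chi^2_1+\tfrac12\chi^2_2$ mixture.
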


\begin{proof}

Define
$$
Q(\boldsymbol{g})
=
\boldsymbol{g}^{\top}\boldsymbol{\Sigma}^{-1}\boldsymbol{g}
-
\inf_{\boldsymbol{h}\in K}
(\boldsymbol{g}-\boldsymbol{h})^{\top}
\boldsymbol{\Sigma}^{-1}
(\boldsymbol{g}-\boldsymbol{h}),
$$
where
$$
K
=
\left\{
\boldsymbol{h}\in\mathbb{R}^{2}:
\boldsymbol{a}^{\top}\boldsymbol{h}\geq 0
\right\}.
$$
Because $K$ is closed, the distance from $\boldsymbol{g}$ to $K$ in the norm induced by $\boldsymbol{\Sigma}^{-1}$ is continuous in $\boldsymbol{g}$. Therefore, by Lemma~\ref{lem:reduction}, the central limit theorem, the continuous mapping theorem, and Slutsky's theorem,
$$
\ell_{\Theta,n}(\boldsymbol{\mu}_0)
\ \xrightarrow{d}\
Q(\boldsymbol{G}),
\qquad
\boldsymbol{G}\sim
N_2(\boldsymbol{0},\boldsymbol{\Sigma}).
$$

We first determine the $\boldsymbol{\Sigma}^{-1}$-metric projection of $\boldsymbol{G}$ onto $K$. We distinguish two cases.

\medskip
\noindent\emph{Case 1: $\boldsymbol{a}^{\top}\boldsymbol{G}\geq 0$.}
Then $\boldsymbol{G}\in K$, so the infimum is attained at $\boldsymbol{h}=\boldsymbol{G}$. Hence
$$
Q(\boldsymbol{G})
=
\boldsymbol{G}^{\top}
\boldsymbol{\Sigma}^{-1}
\boldsymbol{G}.
$$

\medskip
\noindent\emph{Case 2: $\boldsymbol{a}^{\top}\boldsymbol{G}<0$.}
Then $\boldsymbol{G}\notin K$, and its $\boldsymbol{\Sigma}^{-1}$-metric projection onto $K$ lies on the boundary
$$
\left\{
\boldsymbol{h}\in\mathbb{R}^{2}:
\boldsymbol{a}^{\top}\boldsymbol{h}=0
\right\}.
$$
Minimizing
$$
(\boldsymbol{G}-\boldsymbol{h})^{\top}
\boldsymbol{\Sigma}^{-1}
(\boldsymbol{G}-\boldsymbol{h})
$$
subject to $\boldsymbol{a}^{\top}\boldsymbol{h}=0$ gives
$$
\boldsymbol{h}^{*}
=
\boldsymbol{G}
-
\frac{\boldsymbol{a}^{\top}\boldsymbol{G}}
     {\boldsymbol{a}^{\top}\boldsymbol{\Sigma}\boldsymbol{a}}
\boldsymbol{\Sigma}\boldsymbol{a}.
$$
Indeed, $\boldsymbol{a}^{\top}\boldsymbol{h}^{*}=0$, and therefore
$$
\inf_{\boldsymbol{h}\in K}
(\boldsymbol{G}-\boldsymbol{h})^{\top}
\boldsymbol{\Sigma}^{-1}
(\boldsymbol{G}-\boldsymbol{h})
=
\frac{
(\boldsymbol{a}^{\top}\boldsymbol{G})^{2}
}{
\boldsymbol{a}^{\top}\boldsymbol{\Sigma}\boldsymbol{a}
}.
$$
Consequently,
$$
Q(\boldsymbol{G})
=
\boldsymbol{G}^{\top}
\boldsymbol{\Sigma}^{-1}
\boldsymbol{G}
-
\frac{
(\boldsymbol{a}^{\top}\boldsymbol{G})^{2}
}{
\boldsymbol{a}^{\top}\boldsymbol{\Sigma}\boldsymbol{a}
}.
$$

Combining the two cases,
\begin{equation}
Q(\boldsymbol{G})
=
\begin{cases}
\boldsymbol{G}^{\top}
\boldsymbol{\Sigma}^{-1}
\boldsymbol{G},
&
\boldsymbol{a}^{\top}\boldsymbol{G}\geq 0,
\\[1.2ex]
\displaystyle
\boldsymbol{G}^{\top}
\boldsymbol{\Sigma}^{-1}
\boldsymbol{G}
-
\frac{
(\boldsymbol{a}^{\top}\boldsymbol{G})^{2}
}{
\boldsymbol{a}^{\top}\boldsymbol{\Sigma}\boldsymbol{a}
},
&
\boldsymbol{a}^{\top}\boldsymbol{G}<0.
\end{cases}
\label{eq:boundary_piecewise_Q}
\end{equation}

Let $\boldsymbol{\Sigma}^{1/2}$ denote the symmetric positive-definite square root of $\boldsymbol{\Sigma}$, and define
$$
\boldsymbol{Y}
=
\boldsymbol{\Sigma}^{-1/2}\boldsymbol{G}
\sim N_2(\boldsymbol{0},I_2),
\qquad
\boldsymbol{c}
=
\boldsymbol{\Sigma}^{1/2}\boldsymbol{a}.
$$
Choose an orthonormal basis
$\{\boldsymbol{e}_1,\boldsymbol{e}_2\}$ such that
$$
\boldsymbol{e}_1
=
\frac{\boldsymbol{c}}{\|\boldsymbol{c}\|},
$$
and write
$$
\boldsymbol{Y}
=
U\boldsymbol{e}_1
+
V\boldsymbol{e}_2,
$$
where $U$ and $V$ are independent standard normal random variables. Then
$$
\boldsymbol{a}^{\top}\boldsymbol{G}
=
\boldsymbol{c}^{\top}\boldsymbol{Y}
=
\|\boldsymbol{c}\|U,
$$
and
$$
\boldsymbol{G}^{\top}
\boldsymbol{\Sigma}^{-1}
\boldsymbol{G}
=
\boldsymbol{Y}^{\top}\boldsymbol{Y}
=
U^2+V^2.
$$
Moreover, because
$$
\boldsymbol{a}^{\top}\boldsymbol{\Sigma}\boldsymbol{a}
=
\|\boldsymbol{c}\|^2,
$$
we have
$$
\frac{
(\boldsymbol{a}^{\top}\boldsymbol{G})^2
}{
\boldsymbol{a}^{\top}\boldsymbol{\Sigma}\boldsymbol{a}
}
=
U^2.
$$
Substituting these identities into
\eqref{eq:boundary_piecewise_Q} gives
$$
Q(\boldsymbol{G})
=
\begin{cases}
U^2+V^2,
&
U\geq 0,
\\
V^2,
&
U<0.
\end{cases}
$$
Therefore, for every $x\geq 0$,
\begin{align*}
\Pp\{Q(\boldsymbol{G})\leq x\}
&=
\Pp(U^2+V^2\leq x,\ U\geq 0)
+
\Pp(V^2\leq x,\ U<0)
\\
&=
\frac{1}{2}\Pp(U^2+V^2\leq x)
+
\frac{1}{2}\Pp(V^2\leq x)
\\
&=
\frac{1}{2}\Pp(\chi_2^2\leq x)
+
\frac{1}{2}\Pp(\chi_1^2\leq x).
\end{align*}
The second equality follows because the sign of $U$ is independent of $U^2$, and, since $U$ and $V$ are independent, it is also independent of the pair $(U^2,V^2)$. Moreover, each sign has probability $1/2$. Thus
$$
Q(\boldsymbol{G})
\sim
\frac{1}{2}\chi_1^2
+
\frac{1}{2}\chi_2^2
=
\barchi{1}{2},
$$
which proves the result.

\end{proof}

The 0.95 quantile of $\barchi{1}{2}$ is 5.138, compared with $\chi^2_{2,0.95}=5.991$.  Thus $\chi^2_2$ calibration is asymptotically conservative for this fixed boundary-point statistic.

\subsection{Composite equality null versus ordered alternative}

On the event $\mathcal D_{H,n}$, define
$$
  T_n
  =
  -2\log
  \frac{\sup_{\boldsymbol{\mu}\in H}L_n(\boldsymbol{\mu})}
       {\sup_{\boldsymbol{\mu}\in\Theta}L_n(\boldsymbol{\mu})}.
$$
On $\mathcal D_{H,n}^{c}$ the raw statistic is undefined and the infeasibility is recorded; for the asymptotic statement below only, set $T_n=0$ on this event.
This is not the statistic in Theorem~\ref{thm:fixed-boundary}: the common mean is profiled out under $H$.

The next theorem gives the calibration for this distinct composite null and is needed to avoid applying the fixed-vector boundary critical value after the common mean has been profiled out.

\begin{theorem}[El Barmi's $k=2$ equality test]\label{thm:composite}
Suppose $\boldsymbol{X}_1,\ldots,\boldsymbol{X}_n$ are i.i.d., the true mean belongs to $H$, $\E|Z_i|^3<\infty$, and $\sigma_Z^2=\operatorname{Var}(Z_i)=\boldsymbol{a}^\top\Sigma\boldsymbol{a}>0$. Then
$$
  T_n\ \xrightarrow{d}\ \barchi{0}{1}
  =
  \tfrac12\chi^2_0+\tfrac12\chi^2_1,
$$
where $\chi^2_0$ is a point mass at zero.
\end{theorem}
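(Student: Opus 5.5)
We reduce $T_n$ to a scalar problem using Lemma~\ref{lem:profile}. That lemma gives $\sup_{H}L_n=L_{Z,n}(0)$ and $\sup_{\Theta}L_n=\sup_{d\ge0}L_{Z,n}(d)$. Hence, on $\mathcal D_{H,n}$,
$$
  T_n=\ell_{Z,n}(0)-\inf_{d\ge0}\ell_{Z,n}(d).
$$
We then use the concavity of $d\mapsto\log L_{Z,n}(d)$ and the fact that it is maximized at $\bar Z$, exactly as in the discussion following Lemma~\ref{lem:profile}. If $\bar Z\ge0$, the infimum is $0$ and $T_n=\ell_{Z,n}(0)$. If $\bar Z<0$ and $L_{Z,n}(0)>0$, the infimum over $d\ge0$ is attained at $d=0$ and $T_n=0$. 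Therefore, on $\mathcal D_{H,n}$,
$$
  T_n=\mathbf 1\{\bar Z\ge0\}\,\ell_{Z,n}(0).
$$
No localization argument is needed here, because the exact profile identity already removes the nuisance direction.

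Next we show $\Pp(\mathcal D_{H,n})\to1$. Since $\E Z_i=0$ and $\sigma_Z^2>0$, the variable $Z_i$ takes both strictly negative and strictly positive values with positive probability. So $\Pp(\min_iZ_i<0<\max_iZ_i)\to1$, which means $0$ lies in the interior of the convex hull of $Z_1,\ldots,Z_n$ and $L_{Z,n}(0)>0$. On this event we apply Owen's scalar expansion. With $\lambda$ the Lagrange multiplier, $\lambda=O_p(n^{-1/2})$ and $\max_i|Z_i|=o_p(n^{1/2})$; the latter already follows from $\E Z_i^2<\infty$, and the third moment is more than enough. Expanding $2\sum\log(1+\lambda Z_i)$ gives
$$
  \ell_{Z,n}(0)=\frac{n\bar Z^2}{S_Z^2}+o_p(1),\qquad S_Z^2=n^{-1}\sum_i Z_i^2\xrightarrow{\Pp}\sigma_Z^2.
$$

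Put $W_n=\sqrt n\,\bar Z/S_Z$. Then
$$
  T_n=\mathbf 1\{W_n\ge0\}\,W_n^2+o_p(1).
$$
To see this, note that the indicator multiplies $\ell_{Z,n}(0)=W_n^2+o_p(1)$; the event $\{\bar Z\ge0\}$ equals $\{W_n\ge0\}$; and the remainder stays $o_p(1)$ after multiplication by a bounded indicator. By the central limit theorem $W_n\xrightarrow{d}U\sim N(0,1)$. The map $w\mapsto w^2\mathbf 1\{w\ge0\}=(\max(w,0))^2$ is continuous, so the continuous mapping theorem together with Slutsky's theorem gives $T_n\xrightarrow{d}(U^+)^2$.

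Finally, we identify the limit. For $x\ge0$,
$$
  \Pp\{(U^+)^2\le x\}=\tfrac12+\tfrac12\Pp(\chi^2_1\le x),
$$
because $U$ is symmetric and its sign is independent of $U^2$. This is exactly the distribution $\barchi{0}{1}$. The assignment $T_n=0$ off $\mathcal D_{H,n}$ does not affect the limit, since that event has vanishing probability. The only step that is not routine is the Owen-type quadratic expansion of $\ell_{Z,n}(0)$, and in one dimension it is standard.
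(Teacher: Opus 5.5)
Your proposal is correct and follows essentially the same route as the paper: the exact profile identity reduces $T_n$ to $\mathbf 1\{\bar Z\ge0\}\ell_{Z,n}(0)$ on an event of probability tending to one, and Owen's scalar expansion, the central limit theorem and the continuous mapping theorem then give $(U_+)^2\sim\barchi{0}{1}$. Using $S_Z^2$ in place of $\sigma_Z^2$, checking explicitly that $\Pp(\mathcal D_{H,n})\to1$, and noting that finite second moments already suffice for the scalar expansion are minor refinements, not a different argument.
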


\begin{proof}
On the event that the sample contains both positive and negative $Z_i$'s, Lemma~\ref{lem:profile} gives the exact identity
$$
  T_n
  =
  \mathbf 1\{\bar Z\geq0\}\ell_{Z,n}(0).
$$
The complement of this event has probability tending to zero. Standard scalar EL theory yields the following expansion (\citet{owen1988empirical}). 
$$
  \ell_{Z,n}(0)
  =
  \frac{n\bar Z^2}{\sigma_Z^2}+o_{p}(1).
$$
Since $\sqrt n\,\bar Z/\sigma_Z\Rightarrow U\sim N(0,1)$,
$$
  T_n\Rightarrow (U_+)^2,\qquad U_+=\max(U,0),
$$
which has the stated distribution.
\end{proof}

Theorem~\ref{thm:composite} is the $k=2$ specialization of \citet[Theorem~3.2 and Example~2]{el1996empirical}. Let $\Phi$ denote the standard normal cdf and let $z_q=\Phi^{-1}(q)$. For $0<\alpha<1/2$, its positive $1-\alpha$ critical value is
$$
  \chi^2_{1,1-2\alpha}=z_{1-\alpha}^2.
$$
At level 0.05 this equals 2.706.  For an observed $T_n>0$, the asymptotic directional $p$-value is
$$
  \tfrac12\Pp(\chi^2_1\geq T_n)
  =
  1-\Phi(\sqrt{T_n}).
$$
When $\bar Z<0$, the likelihood-ratio statistic collapses to zero. For descriptive reporting we also use the signed root
$$
  r_n=\operatorname{sign}(\bar Z)\sqrt{\ell_{Z,n}(0)},
  \qquad p_{\mathrm{sr}}=1-\Phi(r_n),
$$
which retains the direction discarded by $T_n$ and gives the same rejection rule for positive evidence.

\subsection{Confidence sets and scalar intervals}

Let $c_{12,1-\alpha}$ be the $1-\alpha$ quantile of $\barchi{1}{2}$.  A candidate-wise calibrated joint confidence region is
\begin{align*}
  C^{\mathrm{pt}}_{1-\alpha}
  ={}&
  \{\boldsymbol{\mu}\in\interior(\Theta):
  \ell_{\Theta,n}(\boldsymbol{\mu})\leq\chi^2_{2,1-\alpha}\}\\
  &{}\cup
  \{\boldsymbol{\mu}\in H:
  \ell_{\Theta,n}(\boldsymbol{\mu})\leq c_{12,1-\alpha}\}.
\end{align*}
Under the assumptions of Theorem~\ref{thm:interior} at an interior point and (B1)--(B4) at a boundary point, for every fixed true distribution $F$ with mean $\boldsymbol{\mu}_0\in\Theta$,
$$
  \Pp_F\{\boldsymbol{\mu}_0\in C^{\mathrm{pt}}_{1-\alpha}\}\to1-\alpha.
$$
This statement is pointwise, not uniform in a shrinking neighborhood of $H$.  A closed, uniformly defined alternative is
$$
  C^{\chi^2}_{1-\alpha}
  =
  \{\boldsymbol{\mu}\in\Theta:
  \ell_{\Theta,n}(\boldsymbol{\mu})\leq\chi^2_{2,1-\alpha}\}.
$$
It is first-order exact at fixed interior points and conservative at a fixed boundary point.

For the scalar difference $d=\mu_2-\mu_1$, two valid constructions must not be conflated.  The constrained ratio is
$$
  \ell_{Z,+,n}(d)
  =
  -2\log
  \frac{L_{Z,n}(d)}{\sup_{u\geq0}L_{Z,n}(u)},
  \qquad d\geq0.
$$
Under the scalar EL conditions in Theorem~\ref{thm:composite}, it has a $\chi^2_1$ limit at $d_0>0$ and a $\barchi{0}{1}$ limit at $d_0=0$.  In contrast, the interval used in our data illustrations is the ordinary scalar EL interval intersected with the scientific parameter space:
\begin{equation}\label{eq:truncated-interval}
  I^{\mathrm{tr}}_{1-\alpha}
  =
  \{d\geq0:\ell_{Z,n}(d)\leq\chi^2_{1,1-\alpha}\}.
\end{equation}
Because the denominator in $\ell_{Z,n}$ is unrestricted over $d\in\R$, the ordinary $\chi^2_1$ limit applies at $d_0=0$ as well as at $d_0>0$ (\citet{owen1988empirical}).  Accordingly, \eqref{eq:truncated-interval} is not described as inversion of the constrained ratio. This resolves the apparent conflict between ordinary scalar and chi-bar boundary calibration.

\subsection{Local sequences approaching the boundary}

To assess whether the fixed-candidate calibrations remain valid near $H$, the next result derives the limit along $n^{-1/2}$-local sequences and thereby quantifies the nonuniform transition between the boundary and interior regimes.

\begin{proposition}[Local-to-boundary law]\label{prop:local}
For each $n$, let $\boldsymbol{X}_{ni}=\boldsymbol{\mu}_n+\boldsymbol{\varepsilon}_{ni}$, $i=1,\ldots,n$, where the vectors within each row are i.i.d. and the common distribution of $\boldsymbol{\varepsilon}_{ni}$ does not depend on $n$, with $\E(\boldsymbol{\varepsilon}_{ni})=\boldsymbol{0}$, positive-definite covariance matrix $\Sigma$, and $\E\|\boldsymbol{\varepsilon}_{ni}\|^3<\infty$. Assume that, for every fixed $M<\infty$,
$$
  \Pp\left[
  \boldsymbol{\mu}_n+n^{-1/2}\{\boldsymbol{h}:\|\boldsymbol{h}\|\leq M\}
  \subset\interior\conv(\boldsymbol{X}_{n1},\ldots,\boldsymbol{X}_{nn})
  \right]\to1.
$$
If $\boldsymbol{\mu}_n\in\Theta$, $\boldsymbol{\mu}_n\to\boldsymbol{\mu}_\star\in H$, and
$$
  \frac{\sqrt n\,\boldsymbol{a}^\top\boldsymbol{\mu}_n}
       {\sqrt{\boldsymbol{a}^\top\Sigma\boldsymbol{a}}}
  \longrightarrow \eta\in[0,\infty),
$$
then
$$
  \ell_{\Theta,n}(\boldsymbol{\mu}_n)\Rightarrow Q_\eta,
$$
where, for independent $U,V\sim N(0,1)$,
$$
  Q_\eta
  =
  \begin{cases}
    U^2+V^2,&U\geq-\eta,\\[2mm]
    U^2+V^2-(U+\eta)^2,&U<-\eta.
  \end{cases}
$$
At $\eta=0$, $Q_0\sim\barchi{1}{2}$; as $\eta\to\infty$, $Q_\eta$ approaches $\chi^2_2$.
\end{proposition}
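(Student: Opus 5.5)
The plan is to reduce to the fixed-boundary machinery, centred at the moving point $\boldsymbol{\mu}_n$ rather than a fixed $\boldsymbol{\mu}_0$. Put $\boldsymbol{\Delta}_n=\sqrt n(\bar{\boldsymbol{X}}_n-\boldsymbol{\mu}_n)=n^{-1/2}\sum_i\boldsymbol{\varepsilon}_{ni}$. Its law does not depend on $\boldsymbol{\mu}_n$, so the ordinary CLT gives $\boldsymbol{\Delta}_n\Rightarrow\boldsymbol{G}\sim N_2(\boldsymbol 0,\Sigma)$. The constraint set seen from $\boldsymbol{\mu}_n$ in local coordinates is
$$
K_n=\{\boldsymbol h:\boldsymbol a^\top\boldsymbol h\geq-\sqrt n\,\boldsymbol a^\top\boldsymbol{\mu}_n\}.
$$
Writing $\sigma_Z=(\boldsymbol a^\top\Sigma\boldsymbol a)^{1/2}$, we have $\sqrt n\,\boldsymbol a^\top\boldsymbol{\mu}_n=\sigma_Z\eta_n$ with $\eta_n\to\eta$. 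Thus $K_n$ converges to the shifted half-space $K_\eta=\{\boldsymbol h:\boldsymbol a^\top\boldsymbol h\geq-\sigma_Z\eta\}$.

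\textbf{Step 1: quadratic expansion and localization.} I will re-establish Lemma~\ref{lem:quadratic} and Lemma~\ref{lem:localization} along the triangular array, centred at $\boldsymbol{\mu}_n$. The EL function at $\boldsymbol{\mu}_n+\boldsymbol h/\sqrt n$ depends only on the errors $\boldsymbol{\varepsilon}_{ni}$ and on $\boldsymbol h$. Because the error law is fixed, the Owen-type arguments (multiplier bound $\|\boldsymbol\lambda\|=O_p(n^{-1/2})$ uniformly on $\|\boldsymbol h\|\le M$, $\max_i\|\boldsymbol{\varepsilon}_{ni}\|=o_p(n^{1/2})$ from the moment condition) go through verbatim. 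The assumed local convex-hull condition replaces (B4).

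Localization is the step I expect to need the most care. I will use Lemma~\ref{lem:profile} for it. The constrained maximizer is either $\bar{\boldsymbol{X}}_n$, which is within $O_p(n^{-1/2})$ of $\boldsymbol{\mu}_n$, or else lies on $H$ at the point maximizing $L_n$ subject to $\boldsymbol a^\top\boldsymbol\mu=0$. In the latter case the scalar problem for the $Z$'s has mean $\boldsymbol a^\top\boldsymbol\mu_n=O(n^{-1/2})$. Its tangential coordinate is the profile maximizer along a line at distance $O(n^{-1/2})$ from $\boldsymbol{\mu}_n$, and the concavity-based argument from Appendix A for $\widehat{\boldsymbol\mu}_H$ should adapt because the line offset is $O(n^{-1/2})$. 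Together these give the analogue of Lemma~\ref{lem:reduction}:
$$
\ell_{\Theta,n}(\boldsymbol{\mu}_n)=\boldsymbol{\Delta}_n^\top\Sigma^{-1}\boldsymbol{\Delta}_n-\inf_{\boldsymbol h\in K_n}(\boldsymbol{\Delta}_n-\boldsymbol h)^\top\Sigma^{-1}(\boldsymbol{\Delta}_n-\boldsymbol h)+o_p(1).
$$

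\textbf{Step 2: passage to the limit.} The distance to a half-space has the explicit form $\operatorname{dist}^2_{\Sigma^{-1}}(\boldsymbol g,K_n)=\{(\boldsymbol a^\top\boldsymbol g+\sigma_Z\eta_n)_-\}^2/\sigma_Z^2$. This is jointly continuous in $(\boldsymbol g,\eta_n)$, so continuous mapping and Slutsky's theorem give
$$
\ell_{\Theta,n}(\boldsymbol{\mu}_n)\Rightarrow \boldsymbol G^\top\Sigma^{-1}\boldsymbol G-\{(\boldsymbol a^\top\boldsymbol G/\sigma_Z+\eta)_-\}^2 .
$$
I then use the same orthonormal decomposition $\boldsymbol Y=\Sigma^{-1/2}\boldsymbol G=U\boldsymbol e_1+V\boldsymbol e_2$ as in the proof of Theorem~\ref{thm:fixed-boundary}, with $\boldsymbol e_1=\Sigma^{1/2}\boldsymbol a/\sigma_Z$. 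Then $\boldsymbol a^\top\boldsymbol G/\sigma_Z=U$, and the limit is $U^2+V^2-\{(U+\eta)_-\}^2$, which is exactly $Q_\eta$.

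\textbf{Step 3: endpoints.} At $\eta=0$, $Q_0$ equals $U^2+V^2$ on $\{U\ge0\}$ and $V^2$ on $\{U<0\}$, which is $\barchi{1}{2}$ by the computation already carried out for Theorem~\ref{thm:fixed-boundary}. As $\eta\to\infty$, we have $|Q_\eta-(U^2+V^2)|\le(U+\eta)^2\mathbf 1\{U<-\eta\}$, and this tends to $0$ almost surely. Hence $Q_\eta\to\chi^2_2$ in distribution.
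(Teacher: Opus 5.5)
Your proposal is correct and follows the paper's proof essentially step by step. You centre at $\boldsymbol{\mu}_n$ so the uniform quadratic expansion transfers row by row, localize the constrained maximizer through the profile identity and the scalar multiplier $\gamma_n=O_p(n^{-1/2})$ (valid because $\bar Z_n=O_p(n^{-1/2})$), then apply continuous mapping to the explicit half-space loss $\{(U_n+\eta_n)_-\}^2$ and read off the two endpoint limits. The only difference is that you leave the binding-case localization as a sketch, whereas the paper writes out $\widehat{\boldsymbol{\mu}}_{\Theta,n}-\boldsymbol{\mu}_n=\overline{\boldsymbol{\varepsilon}}_n-\gamma_n n^{-1}\sum_{i}\boldsymbol{\varepsilon}_{ni}Z_{ni}/(1+\gamma_nZ_{ni})$ explicitly; this is a multiplier argument, not a concavity argument.
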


For every finite $\eta>0$, the coupling in Proposition~\ref{prop:local} gives $Q_\eta\leq U^2+V^2$ almost surely. Thus using the interior $\chi^2_2$ critical value along such a local sequence has limiting coverage at least $1-\alpha$, generally strictly greater than $1-\alpha$, and approaches exact coverage only as $\eta\to\infty$. Any finite-sample undercoverage in the simulations below is therefore a higher-order error rather than a contradiction of the local limit.

For the composite test under
$$
  d_n=\eta\sigma_Z/\sqrt n,
$$
the same scalar expansion gives
$$
  T_n\Rightarrow(U+\eta)_+^2.
$$
Therefore, for $0<\alpha<1/2$, the local asymptotic power at level $\alpha$ is
\begin{equation}\label{eq:local-power}
  1-\Phi(z_{1-\alpha}-\eta).
\end{equation}

\section{Simulation study}\label{sec:simulation}

We conducted Monte Carlo experiments to assess the finite-sample behavior of the empirical likelihood procedures in the interior of the order cone, on its boundary, under local sequences approaching the boundary, and under the composite equality null. These settings correspond to the distinct asymptotic regimes developed in Section~2. At a fixed interior point, the constrained empirical likelihood ratio has the usual Wilks limit $\chi^2_2$.

For each simulated sample in the fixed-vector experiments, inference was based on $\ell_{\Theta,n}(\boldsymbol{\mu}_0)$. At the nominal $5\%$ level, the interior experiments used the $\chi^2_2$ critical value $5.991$. The fixed-boundary experiments compared the appropriate $\barchi{1}{2}$ critical value $5.138$ with the ordinary $\chi^2_2$ critical value. In the local-to-boundary experiments, the boundary critical value was used at $\eta=0$, whereas the interior critical value was used at $\eta>0$ to examine the nonuniformity of these pointwise calibrations. The composite equality test was based on $T_n$ and the analytic $\barchi{0}{1}$ critical value $2.706$; no bootstrap calibration was used.

We report empirical coverage probabilities or rejection probabilities, as appropriate, together with their Monte Carlo standard errors. We also report the boundary-optimization rate, defined as the proportion of replications for which the sample mean violated the order restriction and the constrained empirical likelihood denominator was therefore maximized over the boundary $\mu_1=\mu_2$. At a fixed interior point, this rate should converge to zero, whereas under the fixed boundary null it should converge to one half, reflecting the half-space geometry underlying the chi-bar-square limit.

\subsection*{Data-generating models}\label{subsec:data-generating-models}

In all simulations, $\boldsymbol{X}_i=(X_{1i},X_{2i})^\top$, $i=1,\ldots,n$, were generated independently with target mean vector $\boldsymbol{\mu}=(\mu_1,\mu_2)^\top$. Define

$$
\boldsymbol{\Sigma}(\rho)
=
\begin{pmatrix}
1 & \rho\\
\rho & 1
\end{pmatrix},
\qquad
\rho\in\{-0.5,0.3,0.8\}.
$$

These values represent negative, moderate positive, and strong positive dependence, respectively. In the Gaussian and Student $t_5$ models, $\rho$ is the Pearson correlation between the observed components. In the shifted log-normal model, $\rho$ is the correlation between the latent Gaussian components.

\paragraph{Model 1: Bivariate normal} 
Under the Gaussian design, $
\boldsymbol{X}_i
\sim
N_2\!\left\{
\boldsymbol{\mu},
\boldsymbol{\Sigma}(\rho)
\right\}.
$

\paragraph{Model 2: Standardized bivariate Student $t_5$.}
For the heavy-tailed design, we generated

$$
\boldsymbol{G}_i
\sim
N_2\!\left\{
\boldsymbol{0},
\boldsymbol{\Sigma}(\rho)
\right\},
\qquad
W_i\sim\chi^2_5,
$$

where $\boldsymbol{G}_i$ and $W_i$ were independent, and defined

$$
\boldsymbol{X}_i
=
\boldsymbol{\mu}
+
\sqrt{\frac{3}{5}}\,
\frac{\boldsymbol{G}_i}{\sqrt{W_i/5}}.
$$

The factor $\sqrt{3/5}$ standardizes the distribution so that
$
\E(\boldsymbol{X}_i)=\boldsymbol{\mu},
\;
\operatorname{Cov}(\boldsymbol{X}_i)
=
\boldsymbol{\Sigma}(\rho).
$
This model preserves the covariance structure of the Gaussian design while introducing heavier tails.

\paragraph{Model 3: Shifted correlated log-normal.}
For the skewed design, we first generated

$$
\boldsymbol{G}_i
=
(G_{1i},G_{2i})^\top
\sim
N_2\!\left\{
\boldsymbol{0},
\boldsymbol{\Sigma}(\rho)
\right\}.
$$

With $\sigma_L=0.6$, we then defined

$$
X_{ji}
=
\mu_j
+
\frac{
\exp(\sigma_LG_{ji})-\exp(\sigma_L^2/2)
}{
\left\{
\exp(2\sigma_L^2)-\exp(\sigma_L^2)
\right\}^{1/2}
},
\qquad
j=1,2.
$$

The centering and scaling ensure that $
\E(X_{ji})=\mu_j,
\;
\operatorname{Var}(X_{ji})=1.
$ For this model, $\rho$ is the latent Gaussian correlation and is not the Pearson correlation between $X_{1i}$ and $X_{2i}$. The observed correlation is

$$
\operatorname{Corr}(X_{1i},X_{2i})
=
\frac{
\exp(\sigma_L^2\rho)-1
}{
\exp(\sigma_L^2)-1
}.
$$

For $\rho=-0.5$, $0.3$, and $0.8$, the corresponding observed correlations are approximately $-0.380$, $0.263$, and $0.770$, respectively. Unlike the Gaussian and Student $t_5$ designs, this distribution is not elliptically symmetric and has positively skewed marginal errors.

We used $n\in\{30,60,100\}$. Fixed-point coverage and size used 5,000 replications per setting; power used 3,000. The fixed interior mean was $(0,1)^\top$, and the fixed boundary mean was $(0,0)^\top$. Local means were
$$
  \boldsymbol{\mu}_n=(0,d_n)^\top,\qquad
  d_n=\eta\{\operatorname{Var}(X_2-X_1)/n\}^{1/2},
  \quad
  \eta\in\{0,0.5,1,2,4\}.
$$
Standardizing by the standard deviation of the paired difference avoids confounding dependence with effect size.

For an estimated probability $\widehat p$ based on $R$ replications, we report
$$
  \operatorname{MCSE}(\widehat p)
  =
  \{\widehat p(1-\widehat p)/R\}^{1/2}.
$$
The 95\% Monte Carlo intervals shown in the figures are the normal-approximation intervals $\widehat p\pm1.96\operatorname{MCSE}(\widehat p)$.

\subsection{Fixed interior and fixed boundary results}

We first examined the fixed interior point $\boldsymbol{\mu}_0=(0,1)^\top$, for which $\mu{01}<\mu_{02}$. Because the true mean is separated from the boundary, the order restriction is asymptotically inactive, and  $\ell_{\Theta,n}(\boldsymbol{\mu}_0)\overset{d}\longrightarrow\chi^2_2$. This experiment evaluates the finite-sample accuracy of the ordinary Wilks calibration. We next considered the fixed boundary point $\boldsymbol{\mu}_0=(0,0)^\top$, where $\mu_{01}=\mu_{02}$. In this case, Theorem~\ref{thm:fixed-boundary} gives the nonstandard limit $\barchi{1}{2}=\tfrac12\chi^2_1+\tfrac12\chi^2_2$. The simulation therefore compares the theoretically correct chi-bar-square calibration with the ordinary $\chi^2_2$ calibration, which is conservative to first order at the boundary, although finite-sample tail error may partially offset this conservatism.

\begin{table}[!ht]
\centering
\caption{Ranges over the reported $\rho$ and $n$ settings and, for local coverage, over the reported $\eta$ settings. Local coverage uses the boundary critical value at $\eta=0$ and the interior critical value for $\eta>0$.}
\label{tab:summary}
\small
\begin{tabular}{lcccc}
\toprule
Model & Interior coverage & Fixed-boundary size &
Composite size & Local coverage\\
\midrule
Normal & 0.924--0.949 & 0.053--0.073 & 0.045--0.055 & 0.926--0.964\\
Student $t_5$ & 0.912--0.943 & 0.060--0.086 & 0.051--0.068 & 0.909--0.957\\
Log-normal & 0.893--0.936 & 0.057--0.108 & 0.053--0.070 & 0.888--0.949\\
\bottomrule
\end{tabular}
\end{table}

Table~\ref{tab:summary} summarizes the finite-sample performance under the fixed interior and fixed boundary regimes. In the interior of the order cone, coverage based on the asymptotic $\chi^2_2$ calibration was generally below the nominal level, ranging from $0.924$ to $0.949$ under normal sampling, from $0.912$ to $0.943$ under Student $t_5$ sampling, and from $0.893$ to $0.936$ under shifted log-normal sampling. Consistent with this undercoverage, the empirical $0.95$ quantiles of the constrained empirical likelihood ratio exceeded the asymptotic critical value $\chi^2_{2,0.95}=5.991$ in every reported setting; see Appendix~B. The discrepancy decreased as $n$ increased but remained visible at $n=100$, particularly for the heavy-tailed and skewed distributions. Thus, although the ordinary Wilks calibration is asymptotically valid at a fixed interior point, its finite-sample accuracy can be inadequate when the underlying distribution is markedly non-Gaussian.

At the fixed boundary point $\boldsymbol{\mu}_0=(0,0)^\top$, calibration by the theoretically correct chi-bar-square distribution $\barchi{1}{2}=\tfrac12\chi^2_1+\tfrac12\chi^2_2$ also produced some upward size distortion. The empirical rejection probabilities ranged from $0.053$ to $0.073$ under normal sampling, from $0.060$ to $0.086$ under Student $t_5$ sampling, and from $0.057$ to $0.108$ under shifted log-normal sampling. The distortion was most pronounced at $n=30$ and under heavy-tailed or skewed sampling, and it generally decreased with increasing sample size. These results indicate that the chi-bar-square limit correctly describes the boundary asymptotic regime, but first-order calibration alone does not eliminate finite-sample error.

The ordinary $\chi^2_2$ critical value occasionally yielded rejection probabilities numerically closer to $0.05$, particularly in some of the smaller-sample settings. This apparent improvement results from an accidental compensation: the larger $\chi^2_2$ critical value offsets the upward finite-sample tail error of the empirical likelihood statistic. It should not be interpreted as evidence in favor of $\chi^2_2$ calibration at the boundary. Indeed, as the finite-sample error diminishes, the $\chi^2_2$ calibration becomes conservative because it does not account for the boundary geometry.

Finally, the proportion of replications for which $\bar Z<0$, and hence the constrained denominator maximizer lay on the boundary set $H$, ranged from $0.493$ to $0.515$. This is close to the limiting probability $1/2$ and provides empirical support for the equal mixing weights in $\barchi{1}{2}$. Overall, the simulations support the distinction between the interior and boundary limiting distributions while also showing that asymptotic correctness does not necessarily imply accurate inference at the moderate sample sizes considered here. Complete setting-specific results and Monte Carlo standard errors are reported in Appendix~B.

\subsection{Local-to-boundary coverage}
To examine the transition between the boundary and interior regimes, we considered the local sequence $d_n=\eta\,\operatorname{sd}(X_2-X_1)/\sqrt n$, with $\eta\in\{0,0.5,1,2,4\}$. Proposition~\ref{prop:local} shows that the limiting distribution is $Q_\eta$, which equals $\barchi{1}{2}$ at $\eta=0$ and approaches $\chi^2_2$ as $\eta\to\infty$. This experiment evaluates the nonuniform behavior of the pointwise boundary/interior calibration within an $n^{-1/2}$-neighborhood of the boundary.

\begin{figure}[!ht]
\centering
\includegraphics[width=\textwidth]{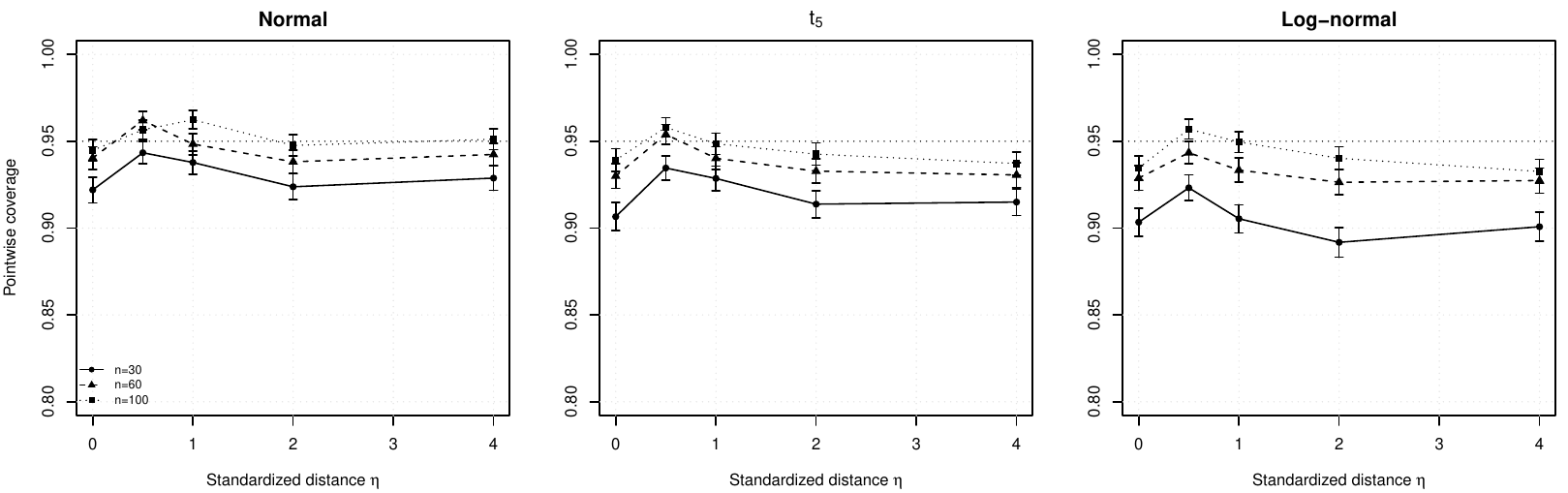}
\caption{Pointwise coverage along $d_n=\eta\,\operatorname{sd}(X_2-X_1)/\sqrt n$ at $\rho=0.3$. At $\eta=0$, calibration uses $\barchi{1}{2}$; at $\eta>0$, it uses $\chi^2_2$.  Error bars are 95\% Monte Carlo intervals.}
\label{fig:local-coverage}
\end{figure}

Figure~\ref{fig:local-coverage} reports coverage along the local sequence $d_n=\eta,\operatorname{sd}(X_2-X_1)/\sqrt n$. At $\eta=0$, calibration uses the boundary distribution $\barchi{1}{2}$, whereas for $\eta>0$ it uses $\chi^2_2$. However, for finite $\eta$, the relevant limiting distribution is $Q_\eta$, which describes the gradual transition from the boundary to the interior regime. The observed boundary-optimization rates were approximately $0.50$, $0.30$, $0.16$, $0.02$, and $0$ for $\eta=0$, $0.5$, $1$, $2$, and $4$, respectively, closely agreeing with the theoretical probabilities $\Phi(-\eta)$.

Coverage ranged from $0.926$ to $0.964$ under normal sampling, from $0.909$ to $0.957$ under Student $t_5$ sampling, and from $0.888$ to $0.949$ under shifted log-normal sampling. The greater undercoverage for heavy-tailed and skewed distributions, particularly at small sample sizes, shows that finite-sample empirical-likelihood error can dominate the conservatism predicted by the local limiting distribution. Overall, the results confirm that the transition between boundary and interior calibration is nonuniform within an $n^{-1/2}$-neighborhood of the boundary and that $Q_\eta$, rather than $\chi^2_2$, provides the appropriate local asymptotic description.
\subsection{Composite equality test and local power}
We examine the null composite equality $H_0:\mu_1=\mu_2$, under which the common mean is unspecified. By the scalar profile identity, the composite likelihood-ratio statistic reduces to the one-sided empirical-likelihood statistic for $Z_i=X_{2i}-X_{1i}$. Theorem~\ref{thm:composite}, corresponding to the bivariate ordered-mean result of \citet{el1996empirical}, gives the limiting distribution $\barchi{0}{1}=\tfrac12\chi^2_0+\tfrac12\chi^2_1$, whose 0.95 critical value is $2.706$. The simulations evaluate the finite-sample size and local power of this analytic calibration; bootstrap calibration is neither used nor required for first-order validity.

Under the composite null $H_0:\mu_1=\mu_2$, the analytic calibration $\barchi{0}{1}=\tfrac12\chi^2_0+\tfrac12\chi^2_1$, with 0.95 critical value $2.706$, produced rejection probabilities of $0.045$--$0.055$ under normal sampling, $0.051$--$0.068$ under Student $t_5$ sampling, and $0.053$--$0.070$ under shifted log-normal sampling. Thus, the calibration derived by \citet{el1996empirical} performed well under normality but was moderately liberal under heavy-tailed and skewed distributions. These results support its first-order validity while also showing that it should not be regarded as exact in small samples. In particular, profiling out the unknown common mean does not require bootstrap calibration.

\begin{figure}[!ht]
\centering
\includegraphics[width=\textwidth]{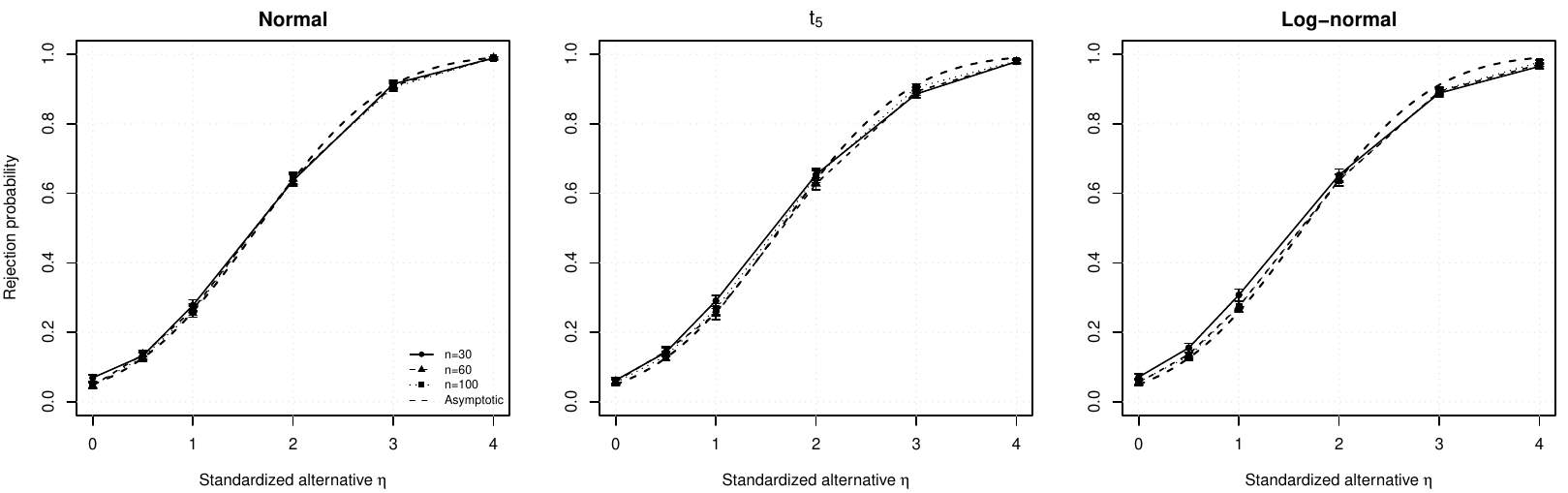}
\caption{Power of the composite equality-versus-order test under $d_n=\eta\,\operatorname{sd}(X_2-X_1)/\sqrt n$ at $\rho=0.3$. The dashed curve is the asymptotic power $1-\Phi(z_{0.95}-\eta)$.  Error bars are 95\% Monte Carlo intervals.}
\label{fig:composite-power}
\end{figure}

Figure~\ref{fig:composite-power} shows that the empirical local-power curves generally agree with the asymptotic power function. At $\eta=2$, the rejection probabilities averaged $0.649$, $0.644$, and $0.640$ across sample sizes for the normal, Student $t_5$, and shifted log-normal models, respectively, compared with the asymptotic value $0.639$. At $\eta=4$, the corresponding averages were $0.990$, $0.981$, and $0.972$, indicating that power approaches one as the standardized distance from the equality boundary increases. Overall, the composite test has the predicted local-power behavior, although convergence to the asymptotic approximation is somewhat slower under heavy-tailed and skewed sampling.

\section{Illustrative data analyses}
\label{sec:data}

We illustrate the scalar directional procedures using three paired datasets and one dataset containing precomputed change scores. For a paired observation $\boldsymbol{X}_i=(X_{1i},X_{2i})^\top$, let

$$
Z_i=X_{2i}-X_{1i},
\qquad
d=\mu_2-\mu_1=\E(Z_i).
$$

The directional hypothesis is

$$
H_0:d=0
\qquad\text{against}\qquad
H_1:d>0,
$$

or equivalently $H_0:\mu_1=\mu_2$ against $H_1:\mu_1<\mu_2$ in the paired bivariate setting. Two distinct scalar procedures are reported and should not be conflated. The confidence interval is the ordinary two-sided 95\% scalar empirical-likelihood interval intersected with the scientifically admissible parameter space,

$$
I_{0.95}^{\mathrm{tr}}
=
\left\{d\geq 0:\ell_{Z,n}(d)\leq\chi^2_{1,0.95}\right\}.
$$

It is therefore not obtained by inverting the constrained scalar likelihood ratio. The directional test uses the profiled statistic $T_n$, whose null limit is $\barchi{0}{1}=\tfrac12\chi^2_0+\tfrac12\chi^2_1$. Hence, when $\bar Z>0$, its analytic one-sided $p$-value is

$$
p_+=1-\Phi\!\left(\sqrt{T_n}\right).
$$

When $\bar Z\leq0$, the constrained statistic is $T_n=0$ and supplies no evidence in favor of $d>0$. For descriptive reporting in this case, we use the signed-root summary

$$
r_n=\operatorname{sign}(\bar Z)\sqrt{\ell_{Z,n}(0)},
\qquad
p_{\mathrm{sr}}=1-\Phi(r_n),
$$

which retains the direction of the observed difference but is not the tail $p$-value of $T_n$. For the genuinely paired datasets, we also describe the constrained bivariate empirical-likelihood estimator $\widehat{\boldsymbol{\mu}}_{\Theta}$. When the sample means satisfy the order restriction, this estimator equals the unconstrained sample mean vector; when the order is violated, it lies on the boundary $\mu_1=\mu_2$ and must be computed from the empirical-likelihood weights.

\begin{table}[htbp]
\centering
\caption{Summary of the illustrative analyses. For $\bar Z>0$, the final column reports the analytic one-sided $\barchi{0}{1}$ $p$-value $p_+$. For $\bar Z\leq0$, it reports the signed-root directional summary $p_{\mathrm{sr}}$, not the tail $p$-value of $T_n$. The Ghana subject-level quantities are naive computational summaries and are not valid inferential results because observations are clustered by health centre.}
\label{tab:real-data-results}
\small
\begin{tabular}{llrcccc}
\toprule
Dataset & Group & $n$ & $\bar Z$ & $I_{0.95}^{\mathrm{tr}}$ & $T_n$ & Directional measure\\
\midrule
Spanish scores & All participants & 20 & $1.450$ & $[0.000,\,2.756]$ & $3.782$ & $0.0259$\\
Anorexia & CBT & 29 & $3.007$ & $[0.605,\,5.913]$ & $6.154$ & $0.0066$\\
Anorexia & Control & 26 & $-0.450$ & $[0.000,\,2.698]$ & $0.000$ & $0.6145$\\
Anorexia & FT & 17 & $7.265$ & $[3.846,\,10.616]$ & $17.738$ & $1.27\times10^{-5}$\\
CalciumBP & Calcium & 10 & $5.000$ & $[0.388,\,10.194]$ & $4.663$ & $0.0154$\\
CalciumBP & Placebo & 11 & $-0.273$ & $[0.000,\,3.477]$ & $0.000$ & $0.5635$\\
Ghana SBP & Complete pairs & 641 & $18.070$ & $[16.623,\,19.521]$ & $412.066$ & $6.51\times10^{-92}$\\
\bottomrule
\end{tabular}
\end{table}

\subsection{Spanish listening scores: reproduction of El Barmi (1996)}

The first example reproduces the analysis in \citet[Example~3]{el1996empirical}, based on pre- and post-course Spanish listening scores for $n=20$ high-school teachers. We take $X_1$ and $X_2$ to be the pre- and post-course scores, respectively, so that $d$ is the mean improvement in listening score. The sample means were

$$
\bar X_{\mathrm{pre}}=27.300,
\qquad
\bar X_{\mathrm{post}}=28.750,
$$

giving $\bar Z=1.450$. Because the sample means satisfy the order restriction, the constrained estimator coincides with $(27.300,28.750)^\top$. The directional statistic was $T_n=3.782$, reproducing the value $3.78$ reported by \citet{el1996empirical} up to rounding, and the analytic one-sided $p$-value was $0.0259$.

The truncated ordinary 95\% scalar EL interval was $[0,2.756]$. Its inclusion of zero does not contradict rejection by the directional test: the interval is obtained from a two-sided 5\% ordinary EL test, whereas $T_n$ tests a one-sided alternative at level 5\%. This example therefore also illustrates why the truncated ordinary interval and the constrained directional test must be interpreted as different inferential procedures.

\subsection{Anorexia data}

The \texttt{MASS::anorexia} data contain pre- and post-study weights, measured in pounds, for 72 young female anorexia patients assigned to a control group, cognitive-behavioural treatment (CBT), or family treatment (FT) \citet[Data set~285]{hand1993handbook,venables2002modern}. We examine the mean pre--post change separately within each group, with $X_1$ denoting pre-study weight and $X_2$ denoting post-study weight. These are within-group comparisons and are not estimates of treatment contrasts between the three groups.

\paragraph{CBT group.} For the CBT group ($n=29$), the sample means were $\bar X_{\mathrm{pre}}=82.690$ and $\bar X_{\mathrm{post}}=85.697$, giving $\bar Z=3.007$. The ordering was satisfied, so the constrained and unconstrained mean estimators coincided. The truncated scalar EL interval was $[0.605,5.913]$, and the directional statistic was $T_n=6.154$, with analytic one-sided $p$-value $0.0066$. Thus, the CBT observations provide evidence of a positive mean pre--post weight change within this group.

\paragraph{Control group.} For the control group ($n=26$), the sample means were $\bar X_{\mathrm{pre}}=81.558$ and $\bar X_{\mathrm{post}}=81.108$, yielding $\bar Z=-0.450$. Because the observed means violated the assumed order, the constrained bivariate estimator lay on the equality boundary:

$$
\widehat{\boldsymbol{\mu}}_{\Theta}
=
(81.297,81.297)^\top.
$$

This common mean differs from the simple arithmetic average of the two sample means because it is determined by the empirical-likelihood weights rather than by Euclidean projection. The truncated scalar EL interval was $[0,2.698]$, and $T_n=0$. The value $0.6145$ reported in Table~\ref{tab:real-data-results} is the signed-root directional summary $p_{\mathrm{sr}}$; it is not the tail $p$-value of $T_n$. The data therefore provide no evidence that mean post-study weight exceeds mean pre-study weight in the control group.

\paragraph{FT group.} For the FT group ($n=17$), the sample means were $\bar X_{\mathrm{pre}}=83.229$ and $\bar X_{\mathrm{post}}=90.494$, giving $\bar Z=7.265$. The truncated scalar EL interval was $[3.846,10.616]$, and $T_n=17.738$, with analytic one-sided $p$-value $1.27\times10^{-5}$. These observations provide strong evidence of a positive mean pre--post weight change within the FT group.

The point estimates differ across the three groups, but these separate within-group analyses do not establish treatment-effect heterogeneity. Such a conclusion would require direct between-group contrasts, together with an appropriate multiplicity or interaction analysis. No multiplicity adjustment has been made for the three exploratory within-group tests reported here.

\subsection{Ghana blood-pressure data}

The \texttt{bp::bp\_ghana} data contain blood-pressure measurements from 757 participants in 32 community health centres participating in a pragmatic cluster-randomized trial in Ghana \citet{schwenck2022bp}. Twelve-month systolic blood pressure (SBP) was observed for 641 participants with a baseline measurement. To represent a reduction in SBP as a positive difference, we set

$$
X_1=\text{12-month SBP},
\qquad
X_2=\text{baseline SBP}.
$$

Among the 641 complete pairs, the corresponding sample means were $\bar X_1=137.824$ and $\bar X_2=155.894$ mmHg, giving an observed mean reduction of $\bar Z=18.070$ mmHg. Since the ordering was satisfied, the constrained estimator equalled $(137.824,155.894)^\top$. If the 641 pairs were treated as independent, the truncated scalar EL interval would be $[16.623,19.521]$ mmHg, with $T_n=412.066$ and analytic directional $p$-value $6.51\times10^{-92}$.

These subject-level interval and test calculations are descriptive only. Randomization occurred at the health-centre level, so participants within the same centre cannot be assumed independent, and the i.i.d. empirical-likelihood theory developed in Section~2 does not validate the reported subject-level interval or $p$-value. Moreover, the calculation combines the randomized groups and therefore does not estimate the intervention contrast. The complete-case mean reduction may also be biased if missing 12-month SBP is informative. Valid inference would require independent centre-level summaries or a cluster-robust empirical-likelihood extension, together with an appropriate treatment of missing follow-up measurements.

\subsection{Calcium and blood-pressure data}

The \texttt{Stat2Data::CalciumBP} data contain one blood-pressure decrease score for each of 21 men randomized to calcium supplementation or placebo \citet{lyle1987blood}. The recorded response is beginning-minus-ending blood pressure over the 12-week study period, so a positive value represents a reduction in blood pressure. Because only the derived decrease score is available, this example illustrates the scalar $Z$-based procedure rather than inference for a bivariate mean vector.

For the calcium group ($n=10$), the mean decrease was $5.000$. The truncated scalar EL interval was $[0.388,10.194]$, and the directional statistic was $T_n=4.663$, with analytic one-sided $p$-value $0.0154$. Thus, the calcium group shows evidence of a positive mean decrease relative to zero. For the placebo group ($n=11$), the mean decrease was $-0.273$, the truncated interval was $[0,3.477]$, and $T_n=0$. The reported value $0.5635$ is the signed-root directional summary and indicates no positive directional evidence within the placebo group.




\subsection{Summary of the data illustrations}

The examples illustrate how the procedures behave when the empirical ordering is either satisfied or violated. When $\bar Z>0$, as in the Spanish, CBT, FT, and complete-case Ghana calculations, the bivariate constrained estimator coincides with the unconstrained sample mean vector. When $\bar Z<0$, as in the anorexia control group, the constrained bivariate estimator lies on the equality boundary; importantly, its common coordinate is determined by empirical-likelihood weighting and is not generally the Euclidean average of the sample means. The scalar confidence interval remains within $[0,\infty)$ because the ordinary two-sided EL interval is intersected with the admissible parameter space after inversion.

These analyses are methodological illustrations rather than evidence that the order restriction itself generates additional information. Their interpretation must respect the underlying sampling designs: the anorexia calculations are exploratory within-group comparisons, the Ghana subject-level calculations do not account for centre-level clustering or potentially informative missingness, and the CalciumBP within-arm analyses do not estimate the randomized treatment contrast. Subject to these qualifications, the examples demonstrate the computational and interpretive distinctions among constrained bivariate estimation, truncated ordinary scalar EL intervals, and analytic one-sided equality-versus-order testing.

\section{Discussion}

The central conceptual distinction is between fixed-vector and profiled boundary hypotheses. General chi-bar-square theory for inequality-constrained empirical likelihood already exists in El Barmi~\cite{el1996empirical}. The composite equality-versus-order test considered in this paper is precisely its bivariate ordered-mean specialization and should therefore use the analytic $\bar{\chi}^{2}_{0,1}$ limiting distribution. The fixed-vector boundary statistic is different because it fixes both coordinates, leaving one tangential degree of freedom when the order constraint is active; its limiting distribution is $\bar{\chi}^{2}_{1,2}$.

The local result clarifies why a binary interior--boundary description is not uniform. For distances of order $n^{-1/2}$ from the boundary, the limiting distribution is the interpolating law $Q_{\eta}$, rather than either endpoint distribution, except when $\eta=0$ or as $\eta\to\infty$. Candidate-wise confidence regions are pointwise valid, but their finite-sample coverage may be poor under heavy-tailed or skewed distributions. The simulation results therefore do not support the claim that first-order chi-bar-square calibration alone resolves small-sample approximation error. Adjusted or bootstrap calibration may be useful as a finite-sample extension, but any such procedure requires its own theoretical or empirical justification and is not necessary merely because the common boundary mean is unknown.

\section*{Declaration of competing interest}
The author declares no known competing financial interests or personal relationships that could have appeared to influence the work reported in this paper.

\section*{Data and code availability}

The public data sources, scripts, complete design grid, and files needed to reproduce all tables and figures are available in the project repository.\footnote{\url{https://github.com/nareshng/Empirical-likelihood-confidence-regions-for-ordered-means}}

\section*{Acknowledgements}

The author thanks the Editor and anonymous reviewers for their constructive comments and suggestions, which helped improve the presentation and clarity of the manuscript.

\bibliographystyle{elsarticle-harv}
\bibliography{sample}

\section{\textbf{Appendix A}} \label{app:proofs}

\subsection{Proof of Lemma~\ref{lem:quadratic}}

Write $\boldsymbol{Y}_i=\boldsymbol{X}_i-\boldsymbol{\mu}_0$ and $\boldsymbol{W}_i(\boldsymbol{h})=\boldsymbol{Y}_i-\boldsymbol{h}/\sqrt{n}$. Define

$$ \overline{\boldsymbol{W}}_n(\boldsymbol{h})=\frac{1}{n}\sum_{i=1}^n\boldsymbol{W}_i(\boldsymbol{h})=\frac{1}{\sqrt{n}}(\boldsymbol{\Delta}_n-\boldsymbol{h}) $$

and

$$ \boldsymbol{S}_n(\boldsymbol{h})=\frac{1}{n}\sum_{i=1}^n\boldsymbol{W}_i(\boldsymbol{h})\boldsymbol{W}_i(\boldsymbol{h})^\top. $$

Uniformly over $\|\boldsymbol{h}\|\leq M$, the law of large numbers and (B3) give

$$ \sup_{\|\boldsymbol{h}\|\leq M}\|\boldsymbol{S}_n(\boldsymbol{h})-\boldsymbol{\Sigma}\|=o_p(1), $$

$$ \sup_{\|\boldsymbol{h}\|\leq M}\frac{1}{n}\sum_{i=1}^n\|\boldsymbol{W}_i(\boldsymbol{h})\|^3=O_p(1), $$

and

$$ \sup_{\|\boldsymbol{h}\|\leq M}\max_{1\leq i\leq n}\|\boldsymbol{W}_i(\boldsymbol{h})\|=o_p(n^{1/3}). $$

Moreover,

$$ \sup_{\|\boldsymbol{h}\|\leq M}\|\overline{\boldsymbol{W}}_n(\boldsymbol{h})\|=O_p(n^{-1/2}). $$

On the event in (B4), let $\boldsymbol{\lambda}_n(\boldsymbol{h})$ denote the solution of the empirical-likelihood multiplier equation

$$ \sum_{i=1}^n\frac{\boldsymbol{W}_i(\boldsymbol{h})}{1+\boldsymbol{\lambda}_n(\boldsymbol{h})^\top\boldsymbol{W}_i(\boldsymbol{h})}=\boldsymbol{0}. $$

We first establish the usual multiplier-localization bound uniformly over $\|\boldsymbol{h}\|\leq M$; see \citet{owen1990}, \citet{qinlawless1994}, and \citet{owen2001}. Put

$$ b_n(\boldsymbol{h})=\|\overline{\boldsymbol{W}}_n(\boldsymbol{h})\|,\qquad m_n(\boldsymbol{h})=\max_{1\leq i\leq n}\|\boldsymbol{W}_i(\boldsymbol{h})\|,\qquad r_n(\boldsymbol{h})=\|\boldsymbol{\lambda}_n(\boldsymbol{h})\|. $$

If $r_n(\boldsymbol{h})>0$, take the inner product of the multiplier equation with $\boldsymbol{\lambda}_n(\boldsymbol{h})/r_n(\boldsymbol{h})$. Using $x/(1+rx)=x-rx^2/(1+rx)$ gives

$$ b_n(\boldsymbol{h})\geq\frac{r_n(\boldsymbol{h})\lambda_{\min}\{\boldsymbol{S}_n(\boldsymbol{h})\}}{1+r_n(\boldsymbol{h})m_n(\boldsymbol{h})}. $$

The preceding bounds imply

$$ \sup_{\|\boldsymbol{h}\|\leq M}b_n(\boldsymbol{h})m_n(\boldsymbol{h})=o_p(1). $$

In addition, the infimum over $\|\boldsymbol{h}\|\leq M$ of the smallest eigenvalue of $\boldsymbol{S}_n(\boldsymbol{h})$ is bounded away from zero with probability tending to one. Rearranging the preceding inequality therefore gives

$$ \sup_{\|\boldsymbol{h}\|\leq M}\|\boldsymbol{\lambda}_n(\boldsymbol{h})\|=O_p(n^{-1/2}), $$

and hence

$$ \sup_{\|\boldsymbol{h}\|\leq M}\max_{1\leq i\leq n}\left|\boldsymbol{\lambda}_n(\boldsymbol{h})^\top\boldsymbol{W}_i(\boldsymbol{h})\right|=o_p(1). $$

Using $(1+t)^{-1}=1-t+t^2/(1+t)$ in the multiplier equation gives

$$ \boldsymbol{0}=\overline{\boldsymbol{W}}_n(\boldsymbol{h})-\boldsymbol{S}_n(\boldsymbol{h})\boldsymbol{\lambda}_n(\boldsymbol{h})+\boldsymbol{R}_n(\boldsymbol{h}), $$

where

$$ \boldsymbol{R}_n(\boldsymbol{h})=\frac{1}{n}\sum_{i=1}^n\frac{\boldsymbol{W}_i(\boldsymbol{h})\{\boldsymbol{\lambda}_n(\boldsymbol{h})^\top\boldsymbol{W}_i(\boldsymbol{h})\}^2}{1+\boldsymbol{\lambda}_n(\boldsymbol{h})^\top\boldsymbol{W}_i(\boldsymbol{h})}. $$

The preceding bounds yield

$$ \sup_{\|\boldsymbol{h}\|\leq M}\|\boldsymbol{R}_n(\boldsymbol{h})\|=O_p(n^{-1}). $$

Since the smallest eigenvalue of $\boldsymbol{S}_n(\boldsymbol{h})$ is bounded away from zero with probability tending to one, it follows that

$$ \sup_{\|\boldsymbol{h}\|\leq M}\left\|\boldsymbol{\lambda}_n(\boldsymbol{h})-\frac{1}{\sqrt{n}}\boldsymbol{\Sigma}^{-1}(\boldsymbol{\Delta}_n-\boldsymbol{h})\right\|=o_p(n^{-1/2}). $$

Put

$$ t_{ni}(\boldsymbol{h})=\boldsymbol{\lambda}_n(\boldsymbol{h})^\top\boldsymbol{W}_i(\boldsymbol{h}). $$

Since

$$ \sum_{i=1}^n t_{ni}^2(\boldsymbol{h})=n\boldsymbol{\lambda}_n(\boldsymbol{h})^\top\boldsymbol{S}_n(\boldsymbol{h})\boldsymbol{\lambda}_n(\boldsymbol{h}), $$

we have

$$ \sup_{\|\boldsymbol{h}\|\leq M}\sum_{i=1}^nt_{ni}^2(\boldsymbol{h})=O_p(1). $$

Consequently,

$$ \sup_{\|\boldsymbol{h}\|\leq M}\sum_{i=1}^n|t_{ni}(\boldsymbol{h})|^3\leq\left\{\sup_{\|\boldsymbol{h}\|\leq M}\max_{1\leq i\leq n}|t_{ni}(\boldsymbol{h})|\right\}\left\{\sup_{\|\boldsymbol{h}\|\leq M}\sum_{i=1}^nt_{ni}^2(\boldsymbol{h})\right\}=o_p(1). $$

The multiplier equation also gives

$$ \sum_{i=1}^nt_{ni}(\boldsymbol{h})=\sum_{i=1}^nt_{ni}^2(\boldsymbol{h})+o_p(1) $$

uniformly over $\|\boldsymbol{h}\|\leq M$. Using $\log(1+t)=t-\tfrac12t^2+O(|t|^3)$, we obtain

$$ \ell_n\left(\boldsymbol{\mu}_0+\frac{\boldsymbol{h}}{\sqrt{n}}\right)=2\sum_{i=1}^n\log\{1+t_{ni}(\boldsymbol{h})\}=n\boldsymbol{\lambda}_n(\boldsymbol{h})^\top\boldsymbol{S}_n(\boldsymbol{h})\boldsymbol{\lambda}_n(\boldsymbol{h})+o_p(1)=(\boldsymbol{\Delta}_n-\boldsymbol{h})^\top\boldsymbol{\Sigma}^{-1}(\boldsymbol{\Delta}_n-\boldsymbol{h})+o_p(1), $$

uniformly over $\|\boldsymbol{h}\|\leq M$. Therefore,

$$ \sup_{\|\boldsymbol{h}\|\leq M}\left|\ell_n\left(\boldsymbol{\mu}_0+\frac{\boldsymbol{h}}{\sqrt{n}}\right)-(\boldsymbol{\Delta}_n-\boldsymbol{h})^\top\boldsymbol{\Sigma}^{-1}(\boldsymbol{\Delta}_n-\boldsymbol{h})\right|\xrightarrow{\Pp}0. $$

\subsection{Proof of Lemma~\ref{lem:localization}}

Recall that

$$ Z_i=\boldsymbol{a}^\top\boldsymbol{X}_i=X_{2i}-X_{1i},\qquad \overline{Z}=\frac{1}{n}\sum_{i=1}^nZ_i. $$

Because $\boldsymbol{\mu}_0\in H$,

$$ \E(Z_i)=0,\qquad \operatorname{Var}(Z_i)=\boldsymbol{a}^\top\boldsymbol{\Sigma}\boldsymbol{a}>0. $$

Assumption (B4), applied with $M=0$, implies that $\boldsymbol{\mu}_0$ belongs to the interior of the sample convex hull with probability tending to one. Hence both $\mathcal{D}_{\Theta,n}$ and $\mathcal{D}_{H,n}$ occur with probability tending to one. The arbitrary definitions of the two maximizers on the complementary events therefore do not affect their stochastic orders.

Suppose first that $\overline{Z}\geq0$. The uniform weights are then feasible for $\Theta$, and the unrestricted empirical-likelihood maximum is attained at $\overline{\boldsymbol{X}}$. Thus,

$$ \widehat{\boldsymbol{\mu}}_\Theta=\overline{\boldsymbol{X}}, $$

and the central limit theorem gives

$$ \widehat{\boldsymbol{\mu}}_\Theta-\boldsymbol{\mu}_0=O_p(n^{-1/2}). $$

Now suppose that $\overline{Z}<0$. Since $\E(Z_i)=0$ and $\operatorname{Var}(Z_i)>0$, the sample contains both positive and negative values of $Z_i$ with probability tending to one. By Lemma~\ref{lem:profile} and the concavity of $d\mapsto\log L_{Z,n}(d)$, the order restriction binds, and the maximizing weights are

$$ p_i^0=\frac{1}{n(1+\gamma_nZ_i)},\qquad i=1,\ldots,n, $$

where $\gamma_n$ satisfies

$$ \sum_{i=1}^n\frac{Z_i}{1+\gamma_nZ_i}=0. $$

The standard scalar empirical-likelihood multiplier expansion \citet{owen2001} gives

$$ \gamma_n=\frac{\overline{Z}}{n^{-1}\sum_{i=1}^nZ_i^2}+o_p(n^{-1/2})=O_p(n^{-1/2}),\qquad \max_{1\leq i\leq n}|\gamma_nZ_i|=o_p(1). $$

The constrained maximizer is the weighted mean corresponding to these weights. Therefore,

$$ \widehat{\boldsymbol{\mu}}_\Theta-\boldsymbol{\mu}_0=\frac{1}{n}\sum_{i=1}^n\frac{\boldsymbol{X}_i-\boldsymbol{\mu}_0}{1+\gamma_nZ_i}=\overline{\boldsymbol{X}}-\boldsymbol{\mu}_0-\gamma_n\frac{1}{n}\sum_{i=1}^n\frac{(\boldsymbol{X}_i-\boldsymbol{\mu}_0)Z_i}{1+\gamma_nZ_i}. $$

Because $\max_i|\gamma_nZ_i|=o_p(1)$ and

$$ \frac{1}{n}\sum_{i=1}^n\|\boldsymbol{X}_i-\boldsymbol{\mu}_0\|\,|Z_i|=O_p(1), $$

we have

$$ \frac{1}{n}\sum_{i=1}^n\frac{(\boldsymbol{X}_i-\boldsymbol{\mu}_0)Z_i}{1+\gamma_nZ_i}=O_p(1). $$

It follows that

$$ \widehat{\boldsymbol{\mu}}_\Theta-\boldsymbol{\mu}_0=O_p(n^{-1/2}). $$

For the maximizer over $H$, Lemma~\ref{lem:profile} reduces the problem to the scalar constraint $\E(Z_i)=0$, regardless of the sign of $\overline{Z}$. On the event that the sample contains both positive and negative values of $Z_i$, the maximizing weights have the same form

$$ p_i^0=\frac{1}{n(1+\gamma_nZ_i)}, $$

where

$$ \gamma_n=O_p(n^{-1/2}),\qquad \max_{1\leq i\leq n}|\gamma_nZ_i|=o_p(1). $$

Repeating the preceding calculation gives

$$ \widehat{\boldsymbol{\mu}}_H-\boldsymbol{\mu}_0=O_p(n^{-1/2}). $$

This proves both localization statements.

\subsection{Proof of Lemma~\ref{lem:reduction}}

Define

$$ \widehat{\boldsymbol{h}}_{\Theta,n}=\sqrt{n}\left(\widehat{\boldsymbol{\mu}}_\Theta-\boldsymbol{\mu}_0\right). $$

By Lemma~\ref{lem:localization},

$$ \widehat{\boldsymbol{h}}_{\Theta,n}=O_p(1). $$

Let

$$ q_n(\boldsymbol{h})=(\boldsymbol{\Delta}_n-\boldsymbol{h})^\top\boldsymbol{\Sigma}^{-1}(\boldsymbol{\Delta}_n-\boldsymbol{h}), $$

and let

$$ \widetilde{\boldsymbol{h}}_n\in\argmin_{\boldsymbol{h}\in K}q_n(\boldsymbol{h}). $$

Thus, $\widetilde{\boldsymbol{h}}_n$ is the $\boldsymbol{\Sigma}^{-1}$-metric projection of $\boldsymbol{\Delta}_n$ onto $K$. Since $\boldsymbol{0}\in K$,

$$ q_n(\widetilde{\boldsymbol{h}}_n)\leq q_n(\boldsymbol{0})=\boldsymbol{\Delta}_n^\top\boldsymbol{\Sigma}^{-1}\boldsymbol{\Delta}_n. $$

Because $\boldsymbol{\Delta}_n=O_p(1)$ and $\boldsymbol{\Sigma}$ is positive definite,

$$ \widetilde{\boldsymbol{h}}_n=O_p(1). $$

For $\boldsymbol{h}\in K$, define

$$ Q_n(\boldsymbol{h})=\ell_n\left(\boldsymbol{\mu}_0+\frac{\boldsymbol{h}}{\sqrt{n}}\right), $$

with $Q_n(\boldsymbol{h})=+\infty$ whenever the candidate mean lies outside the empirical-likelihood domain. Since $\boldsymbol{\mu}_0\in H$,

$$ \boldsymbol{\mu}_0+\frac{\boldsymbol{h}}{\sqrt{n}}\in\Theta\quad\Longleftrightarrow\quad\boldsymbol{h}\in K. $$

Consequently,

$$ \inf_{\boldsymbol{\mu}\in\Theta}\ell_n(\boldsymbol{\mu})=\inf_{\boldsymbol{h}\in K}Q_n(\boldsymbol{h}). $$

For every $\varepsilon>0$, choose a deterministic $M<\infty$ such that

$$ \Pp\left(\|\widehat{\boldsymbol{h}}_{\Theta,n}\|\leq M,\ \|\widetilde{\boldsymbol{h}}_n\|\leq M\right)\geq1-\varepsilon $$

for all sufficiently large $n$. Put

$$ r_{n,M}=\sup_{\substack{\boldsymbol{h}\in K\\\|\boldsymbol{h}\|\leq M}}|Q_n(\boldsymbol{h})-q_n(\boldsymbol{h})|. $$

By Lemma~\ref{lem:quadratic},

$$ r_{n,M}=o_p(1). $$

On the event that both minimizers lie in the ball of radius $M$, their respective optimality properties give

$$ q_n(\widetilde{\boldsymbol{h}}_n)-r_{n,M}\leq Q_n(\widehat{\boldsymbol{h}}_{\Theta,n})\leq q_n(\widetilde{\boldsymbol{h}}_n)+r_{n,M}. $$

It follows that

$$ \inf_{\boldsymbol{\mu}\in\Theta}\ell_n(\boldsymbol{\mu})=\inf_{\boldsymbol{h}\in K}(\boldsymbol{\Delta}_n-\boldsymbol{h})^\top\boldsymbol{\Sigma}^{-1}(\boldsymbol{\Delta}_n-\boldsymbol{h})+o_p(1). $$

Applying Lemma~\ref{lem:quadratic} at $\boldsymbol{h}=\boldsymbol{0}$ also gives

$$ \ell_n(\boldsymbol{\mu}_0)=\boldsymbol{\Delta}_n^\top\boldsymbol{\Sigma}^{-1}\boldsymbol{\Delta}_n+o_p(1). $$

On $\mathcal{D}_{\Theta,n}$, the definition of the constrained empirical log-likelihood ratio gives

$$ \ell_{\Theta,n}(\boldsymbol{\mu}_0)=\ell_n(\boldsymbol{\mu}_0)-\inf_{\boldsymbol{\mu}\in\Theta}\ell_n(\boldsymbol{\mu}). $$

Since $\Pp(\mathcal{D}_{\Theta,n})\to1$, substitution yields

$$ \ell_{\Theta,n}(\boldsymbol{\mu}_0)=\boldsymbol{\Delta}_n^\top\boldsymbol{\Sigma}^{-1}\boldsymbol{\Delta}_n-\inf_{\boldsymbol{h}\in K}(\boldsymbol{\Delta}_n-\boldsymbol{h})^\top\boldsymbol{\Sigma}^{-1}(\boldsymbol{\Delta}_n-\boldsymbol{h})+o_p(1), $$

which proves \eqref{eq:projection-reduction}.

\subsection{Proof of Proposition~\ref{prop:local}}
\begin{proof}
Put $\overline{\boldsymbol{X}}_n=n^{-1}\sum_{i=1}^n\boldsymbol{X}_{ni}$ and
$$\boldsymbol{\Delta}_n=\sqrt{n}\bigl(\overline{\boldsymbol{X}}_n-\boldsymbol{\mu}_n\bigr)=n^{-1/2}\sum_{i=1}^n\boldsymbol{\varepsilon}_{ni}.$$
Then $\boldsymbol{\Delta}_n\Rightarrow\boldsymbol{G}\sim N_2(\boldsymbol{0},\boldsymbol{\Sigma})$, and hence $\boldsymbol{\Delta}_n=O_p(1)$. Define
$$b_n=\sqrt{n}\,\boldsymbol{a}^{\top}\boldsymbol{\mu}_n.$$
For a local parameter $\boldsymbol{\nu}=\boldsymbol{\mu}_n+\boldsymbol{h}/\sqrt{n}$, the order constraint is equivalent to $\boldsymbol{h}\in K_n$, where
$$K_n=\left\{\boldsymbol{h}\in\mathbb{R}^2:\boldsymbol{a}^{\top}\boldsymbol{h}\geq-b_n\right\}.$$
Because $\boldsymbol{X}_{ni}-\boldsymbol{\mu}_n=\boldsymbol{\varepsilon}_{ni}$ has a distribution that does not depend on $n$, the proof of Lemma~\ref{lem:quadratic} applies row by row. Therefore, for every fixed $M<\infty$,
$$\sup_{\|\boldsymbol{h}\|\leq M}\left|\ell_n\left(\boldsymbol{\mu}_n+\frac{\boldsymbol{h}}{\sqrt{n}}\right)-(\boldsymbol{\Delta}_n-\boldsymbol{h})^{\top}\boldsymbol{\Sigma}^{-1}(\boldsymbol{\Delta}_n-\boldsymbol{h})\right|\xrightarrow{\mathbb{P}}0.$$
The assumed local convex-hull condition also gives $\mathbb{P}(\mathcal{D}_{\Theta,n})\to1$. Thus, the arbitrary definition of the constrained maximizer on $\mathcal{D}_{\Theta,n}^{c}$ does not affect the following stochastic-order arguments.

It remains to establish localization under the moving constraint. Let
$$Z_{ni}=\boldsymbol{a}^{\top}\boldsymbol{X}_{ni}=d_n+\xi_{ni},\qquad \overline{Z}_n=\frac{1}{n}\sum_{i=1}^nZ_{ni},$$
where $d_n=\boldsymbol{a}^{\top}\boldsymbol{\mu}_n=O(n^{-1/2})$ and $\xi_{ni}=\boldsymbol{a}^{\top}\boldsymbol{\varepsilon}_{ni}$ has mean zero and variance
$$\sigma_Z^2=\boldsymbol{a}^{\top}\boldsymbol{\Sigma}\boldsymbol{a}>0.$$
Notice that
$$\sqrt{n}\,\overline{Z}_n=b_n+\boldsymbol{a}^{\top}\boldsymbol{\Delta}_n.$$
Let $\widehat{\boldsymbol{\mu}}_{\Theta,n}$ be a constrained maximizer on $\mathcal{D}_{\Theta,n}$ and set it equal to $\boldsymbol{\mu}_n$ on the complement. If $\overline{Z}_n\geq0$, the uniform weights are feasible, and the constrained maximizer is $\overline{\boldsymbol{X}}_n$. Consequently,
$$\sqrt{n}\bigl(\widehat{\boldsymbol{\mu}}_{\Theta,n}-\boldsymbol{\mu}_n\bigr)=\boldsymbol{\Delta}_n=O_p(1).$$
Suppose now that $\overline{Z}_n<0$. The profile identity and strict concavity of the scalar empirical log-likelihood imply that the order constraint binds at zero. Moreover, with probability tending to one, the sample contains values of $Z_{ni}$ on both sides of zero. On this event, the maximizing weights are
$$p_{ni}^{0}=\frac{1}{n(1+\gamma_nZ_{ni})},\qquad \sum_{i=1}^n\frac{Z_{ni}}{1+\gamma_nZ_{ni}}=0.$$
Repeating the scalar multiplier argument in the proof of Lemma~\ref{lem:localization}, using $\overline{Z}_n=O_p(n^{-1/2})$, $n^{-1}\sum_{i=1}^nZ_{ni}^2\xrightarrow{\mathbb{P}}\sigma_Z^2$, and $\max_{1\leq i\leq n}|Z_{ni}|/\sqrt{n}=o_p(1)$, gives
$$\gamma_n=\frac{\overline{Z}_n}{n^{-1}\sum_{i=1}^nZ_{ni}^2}+o_p(n^{-1/2})=O_p(n^{-1/2}),\qquad \max_{1\leq i\leq n}|\gamma_nZ_{ni}|=o_p(1).$$
Equivalently, on the binding event,
$$\sqrt{n}\,\gamma_n=\frac{\boldsymbol{a}^{\top}\boldsymbol{\Delta}_n+b_n}{\sigma_Z^2}+o_p(1).$$
The multiplier equation implies $n^{-1}\sum_{i=1}^n(1+\gamma_nZ_{ni})^{-1}=1$. Put $\overline{\boldsymbol{\varepsilon}}_n=n^{-1}\sum_{i=1}^n\boldsymbol{\varepsilon}_{ni}$. Since the weights sum to one,
$$\widehat{\boldsymbol{\mu}}_{\Theta,n}-\boldsymbol{\mu}_n=\frac{1}{n}\sum_{i=1}^n\frac{\boldsymbol{\varepsilon}_{ni}}{1+\gamma_nZ_{ni}}=\overline{\boldsymbol{\varepsilon}}_n-\gamma_n\frac{1}{n}\sum_{i=1}^n\frac{\boldsymbol{\varepsilon}_{ni}Z_{ni}}{1+\gamma_nZ_{ni}}.$$
The third-moment condition and $\max_i|\gamma_nZ_{ni}|=o_p(1)$ yield
$$\frac{1}{n}\sum_{i=1}^n\frac{\boldsymbol{\varepsilon}_{ni}Z_{ni}}{1+\gamma_nZ_{ni}}=\boldsymbol{\Sigma}\boldsymbol{a}+o_p(1)=O_p(1).$$
Therefore,
$$\widehat{\boldsymbol{\mu}}_{\Theta,n}-\boldsymbol{\mu}_n=O_p(n^{-1/2}),\qquad \widehat{\boldsymbol{h}}_{\Theta,n}:=\sqrt{n}\bigl(\widehat{\boldsymbol{\mu}}_{\Theta,n}-\boldsymbol{\mu}_n\bigr)=O_p(1).$$

Define
$$q_n(\boldsymbol{h})=(\boldsymbol{\Delta}_n-\boldsymbol{h})^{\top}\boldsymbol{\Sigma}^{-1}(\boldsymbol{\Delta}_n-\boldsymbol{h}),$$
and let
$$\widetilde{\boldsymbol{h}}_n=\operatorname*{arg\,min}_{\boldsymbol{h}\in K_n}q_n(\boldsymbol{h}).$$
The minimizer is the $\boldsymbol{\Sigma}^{-1}$-metric projection of $\boldsymbol{\Delta}_n$ onto $K_n$ and has the explicit representation
$$\widetilde{\boldsymbol{h}}_n=\boldsymbol{\Delta}_n-\frac{\boldsymbol{a}^{\top}\boldsymbol{\Delta}_n+b_n}{\sigma_Z^2}\boldsymbol{\Sigma}\boldsymbol{a}\,\mathbf{1}\left\{\boldsymbol{a}^{\top}\boldsymbol{\Delta}_n+b_n<0\right\}.$$
Because $\boldsymbol{\Delta}_n=O_p(1)$ and $b_n=O(1)$, it follows that $\widetilde{\boldsymbol{h}}_n=O_p(1)$. The preceding multiplier expansion also shows that
$$\widehat{\boldsymbol{h}}_{\Theta,n}-\widetilde{\boldsymbol{h}}_n=o_p(1).$$

For $\boldsymbol{h}\in K_n$, define
$$Q_n(\boldsymbol{h})=\ell_n\left(\boldsymbol{\mu}_n+\frac{\boldsymbol{h}}{\sqrt{n}}\right),$$
with $Q_n(\boldsymbol{h})=+\infty$ whenever the candidate mean lies outside the empirical-likelihood domain. For every $\epsilon>0$, the localization results allow us to choose a fixed $M<\infty$ such that
$$\limsup_{n\to\infty}\mathbb{P}\left(\|\widehat{\boldsymbol{h}}_{\Theta,n}\|>M\ \text{or}\ \|\widetilde{\boldsymbol{h}}_n\|>M\right)<\epsilon.$$
On the event that both minimizers belong to this ball, put
$$r_{n,M}=\sup_{\substack{\boldsymbol{h}\in K_n\\ \|\boldsymbol{h}\|\leq M}}|Q_n(\boldsymbol{h})-q_n(\boldsymbol{h})|.$$
The uniform quadratic expansion gives $r_{n,M}=o_p(1)$. Evaluating each criterion at the minimizer of the other criterion then gives
$$\left|\inf_{\boldsymbol{h}\in K_n}Q_n(\boldsymbol{h})-\inf_{\boldsymbol{h}\in K_n}q_n(\boldsymbol{h})\right|\leq r_{n,M}=o_p(1).$$
Because $\epsilon>0$ is arbitrary,
$$\inf_{\boldsymbol{\mu}\in\Theta}\ell_n(\boldsymbol{\mu})=\inf_{\boldsymbol{h}\in K_n}(\boldsymbol{\Delta}_n-\boldsymbol{h})^{\top}\boldsymbol{\Sigma}^{-1}(\boldsymbol{\Delta}_n-\boldsymbol{h})+o_p(1).$$
The local quadratic expansion at $\boldsymbol{h}=\boldsymbol{0}$ gives
$$\ell_n(\boldsymbol{\mu}_n)=\boldsymbol{\Delta}_n^{\top}\boldsymbol{\Sigma}^{-1}\boldsymbol{\Delta}_n+o_p(1).$$
On $\mathcal{D}_{\Theta,n}$,
$$\ell_{\Theta,n}(\boldsymbol{\mu}_n)=\ell_n(\boldsymbol{\mu}_n)-\inf_{\boldsymbol{\mu}\in\Theta}\ell_n(\boldsymbol{\mu}).$$
Since $\mathbb{P}(\mathcal{D}_{\Theta,n})\to1$, substitution yields
$$\ell_{\Theta,n}(\boldsymbol{\mu}_n)=\boldsymbol{\Delta}_n^{\top}\boldsymbol{\Sigma}^{-1}\boldsymbol{\Delta}_n-\inf_{\boldsymbol{h}\in K_n}(\boldsymbol{\Delta}_n-\boldsymbol{h})^{\top}\boldsymbol{\Sigma}^{-1}(\boldsymbol{\Delta}_n-\boldsymbol{h})+o_p(1).$$

Let $\boldsymbol{\Sigma}^{1/2}$ be the symmetric positive-definite square root of $\boldsymbol{\Sigma}$, and set
$$\boldsymbol{Y}_n=\boldsymbol{\Sigma}^{-1/2}\boldsymbol{\Delta}_n,\qquad \boldsymbol{c}=\boldsymbol{\Sigma}^{1/2}\boldsymbol{a},\qquad \sigma_Z=\|\boldsymbol{c}\|,\qquad \boldsymbol{e}_1=\frac{\boldsymbol{c}}{\sigma_Z}.$$
Choose a deterministic unit vector $\boldsymbol{e}_2$ orthogonal to $\boldsymbol{e}_1$ and define
$$U_n=\boldsymbol{e}_1^{\top}\boldsymbol{Y}_n=\frac{\boldsymbol{a}^{\top}\boldsymbol{\Delta}_n}{\sigma_Z},\qquad V_n=\boldsymbol{e}_2^{\top}\boldsymbol{Y}_n.$$
Then $(U_n,V_n)\Rightarrow(U,V)$, where $U$ and $V$ are independent standard normal variables. Under the change of variable $\boldsymbol{x}=\boldsymbol{\Sigma}^{-1/2}\boldsymbol{h}$, the set $K_n$ becomes
$$\left\{\boldsymbol{x}\in\mathbb{R}^2:\boldsymbol{e}_1^{\top}\boldsymbol{x}\geq-\eta_n\right\},\qquad \eta_n=\frac{b_n}{\sigma_Z}=\frac{\sqrt{n}\,\boldsymbol{a}^{\top}\boldsymbol{\mu}_n}{\sigma_Z}.$$
Consequently, the squared projection loss is
$$\inf_{\boldsymbol{h}\in K_n}q_n(\boldsymbol{h})=(U_n+\eta_n)^2\mathbf{1}\{U_n<-\eta_n\},$$
and hence
$$\ell_{\Theta,n}(\boldsymbol{\mu}_n)=U_n^2+V_n^2-(U_n+\eta_n)^2\mathbf{1}\{U_n<-\eta_n\}+o_p(1).$$
By assumption, $\eta_n\to\eta$, so
$$(U_n,V_n,\eta_n)\Rightarrow(U,V,\eta).$$
Writing $x_-=\max\{-x,0\}$, the function
$$f(u,v,t)=u^2+v^2-\{(u+t)_-\}^2$$
is continuous everywhere, including at $u=-t$. The continuous mapping theorem therefore gives
$$\ell_{\Theta,n}(\boldsymbol{\mu}_n)\Rightarrow Q_\eta=U^2+V^2-(U+\eta)^2\mathbf{1}\{U<-\eta\}.$$
At $\eta=0$,
$$Q_0=\begin{cases}U^2+V^2,&U\geq0,\\V^2,&U<0.\end{cases}$$
Because $\operatorname{sign}(U)$ is independent of $U^2$, and $V$ is independent of $U$, it follows that
$$Q_0\sim\frac{1}{2}\chi_1^2+\frac{1}{2}\chi_2^2.$$
Finally, under a common coupling of $U$ and $V$, $\mathbf{1}\{U<-\eta\}\to0$ almost surely as $\eta\to\infty$. Therefore,
$$Q_\eta\longrightarrow U^2+V^2\quad\text{almost surely},$$
and hence $Q_\eta\Rightarrow\chi_2^2$ as $\eta\to\infty$.
\end{proof}

\section{\textbf{Appendix B}} \label{app:simulation}

\begin{longtable}{llrrrrr}
\caption{Interior results for $\mu=(0,1)^\top$. Coverage uses the $\chi^2_2$ critical value 5.991. Each entry uses 5,000 replications.}\label{tab:interior-full}\\
\toprule
Model & $\rho$ & $n$ & Coverage & MCSE & Empirical $q_{.95}$ & Boundary rate \\
\midrule
\endfirsthead
\toprule
Model & $\rho$ & $n$ & Coverage & MCSE & Empirical $q_{.95}$ & Boundary rate \\
\midrule
\endhead
\midrule
\multicolumn{7}{r}{Continued on next page}\\
\endfoot
\bottomrule
\endlastfoot
Log-normal & -0.5 & 30 & 0.898 & 0.004 & 8.530 & 0.001 \\
Log-normal & -0.5 & 60 & 0.934 & 0.004 & 6.693 & 0.000 \\
Log-normal & -0.5 & 100 & 0.936 & 0.003 & 6.558 & 0.000 \\
Log-normal & 0.3 & 30 & 0.898 & 0.004 & 8.587 & 0.000 \\
Log-normal & 0.3 & 60 & 0.918 & 0.004 & 7.165 & 0.000 \\
Log-normal & 0.3 & 100 & 0.931 & 0.004 & 6.634 & 0.000 \\
Log-normal & 0.8 & 30 & 0.893 & 0.004 & 8.849 & 0.000 \\
Log-normal & 0.8 & 60 & 0.922 & 0.004 & 7.071 & 0.000 \\
Log-normal & 0.8 & 100 & 0.931 & 0.004 & 6.638 & 0.000 \\
Normal & -0.5 & 30 & 0.924 & 0.004 & 7.076 & 0.001 \\
Normal & -0.5 & 60 & 0.942 & 0.003 & 6.421 & 0.000 \\
Normal & -0.5 & 100 & 0.949 & 0.003 & 6.071 & 0.000 \\
Normal & 0.3 & 30 & 0.926 & 0.004 & 7.048 & 0.000 \\
Normal & 0.3 & 60 & 0.942 & 0.003 & 6.368 & 0.000 \\
Normal & 0.3 & 100 & 0.942 & 0.003 & 6.293 & 0.000 \\
Normal & 0.8 & 30 & 0.924 & 0.004 & 7.078 & 0.000 \\
Normal & 0.8 & 60 & 0.941 & 0.003 & 6.297 & 0.000 \\
Normal & 0.8 & 100 & 0.948 & 0.003 & 6.071 & 0.000 \\
Student $t_5$ & -0.5 & 30 & 0.913 & 0.004 & 7.562 & 0.003 \\
Student $t_5$ & -0.5 & 60 & 0.933 & 0.004 & 6.683 & 0.000 \\
Student $t_5$ & -0.5 & 100 & 0.939 & 0.003 & 6.461 & 0.000 \\
Student $t_5$ & 0.3 & 30 & 0.912 & 0.004 & 7.588 & 0.000 \\
Student $t_5$ & 0.3 & 60 & 0.933 & 0.004 & 6.583 & 0.000 \\
Student $t_5$ & 0.3 & 100 & 0.943 & 0.003 & 6.262 & 0.000 \\
Student $t_5$ & 0.8 & 30 & 0.912 & 0.004 & 7.437 & 0.000 \\
Student $t_5$ & 0.8 & 60 & 0.929 & 0.004 & 6.919 & 0.000 \\
Student $t_5$ & 0.8 & 100 & 0.939 & 0.003 & 6.355 & 0.000 \\
\end{longtable}

\begin{longtable}{llrrrrr}
\caption{Fixed-point boundary results for $\mu_0=(0,0)^\top$. The chi-bar and ordinary critical values are 5.138 and 5.991.}\label{tab:boundary-full}\\
\toprule
Model & $\rho$ & $n$ & Size, $\bar\chi^2_{1,2}$ & MCSE & Size, $\chi^2_2$ & Boundary rate \\
\midrule
\endfirsthead
\toprule
Model & $\rho$ & $n$ & Size, $\bar\chi^2_{1,2}$ & MCSE & Size, $\chi^2_2$ & Boundary rate \\
\midrule
\endhead
\midrule
\multicolumn{7}{r}{Continued on next page}\\
\endfoot
\bottomrule
\endlastfoot
Log-normal & -0.5 & 30 & 0.104 & 0.004 & 0.081 & 0.497 \\
Log-normal & -0.5 & 60 & 0.067 & 0.004 & 0.048 & 0.502 \\
Log-normal & -0.5 & 100 & 0.063 & 0.003 & 0.044 & 0.504 \\
Log-normal & 0.3 & 30 & 0.090 & 0.004 & 0.067 & 0.515 \\
Log-normal & 0.3 & 60 & 0.070 & 0.004 & 0.048 & 0.499 \\
Log-normal & 0.3 & 100 & 0.060 & 0.003 & 0.042 & 0.497 \\
Log-normal & 0.8 & 30 & 0.108 & 0.004 & 0.084 & 0.493 \\
Log-normal & 0.8 & 60 & 0.081 & 0.004 & 0.057 & 0.503 \\
Log-normal & 0.8 & 100 & 0.057 & 0.003 & 0.040 & 0.511 \\
Normal & -0.5 & 30 & 0.072 & 0.004 & 0.049 & 0.500 \\
Normal & -0.5 & 60 & 0.060 & 0.003 & 0.041 & 0.500 \\
Normal & -0.5 & 100 & 0.053 & 0.003 & 0.032 & 0.505 \\
Normal & 0.3 & 30 & 0.073 & 0.004 & 0.052 & 0.499 \\
Normal & 0.3 & 60 & 0.062 & 0.003 & 0.041 & 0.496 \\
Normal & 0.3 & 100 & 0.058 & 0.003 & 0.037 & 0.493 \\
Normal & 0.8 & 30 & 0.073 & 0.004 & 0.051 & 0.494 \\
Normal & 0.8 & 60 & 0.056 & 0.003 & 0.039 & 0.505 \\
Normal & 0.8 & 100 & 0.055 & 0.003 & 0.035 & 0.498 \\
Student $t_5$ & -0.5 & 30 & 0.086 & 0.004 & 0.064 & 0.502 \\
Student $t_5$ & -0.5 & 60 & 0.072 & 0.004 & 0.049 & 0.497 \\
Student $t_5$ & -0.5 & 100 & 0.060 & 0.003 & 0.041 & 0.496 \\
Student $t_5$ & 0.3 & 30 & 0.084 & 0.004 & 0.065 & 0.506 \\
Student $t_5$ & 0.3 & 60 & 0.062 & 0.003 & 0.045 & 0.500 \\
Student $t_5$ & 0.3 & 100 & 0.063 & 0.003 & 0.045 & 0.501 \\
Student $t_5$ & 0.8 & 30 & 0.074 & 0.004 & 0.054 & 0.498 \\
Student $t_5$ & 0.8 & 60 & 0.063 & 0.003 & 0.043 & 0.505 \\
Student $t_5$ & 0.8 & 100 & 0.067 & 0.004 & 0.046 & 0.498 \\
\end{longtable}

\begin{longtable}{llrrrr}
\caption{Composite equality-null results using the analytic $\bar\chi^2_{0,1}$ critical value 2.706.}\label{tab:composite-full}\\
\toprule
Model & $\rho$ & $n$ & Rejection rate & MCSE & Empirical $q_{.95}$ \\
\midrule
\endfirsthead
\toprule
Model & $\rho$ & $n$ & Rejection rate & MCSE & Empirical $q_{.95}$ \\
\midrule
\endhead
\midrule
\multicolumn{6}{r}{Continued on next page}\\
\endfoot
\bottomrule
\endlastfoot
Log-normal & -0.5 & 30 & 0.062 & 0.003 & 3.058 \\
Log-normal & -0.5 & 60 & 0.055 & 0.003 & 2.876 \\
Log-normal & -0.5 & 100 & 0.053 & 0.003 & 2.773 \\
Log-normal & 0.3 & 30 & 0.070 & 0.004 & 3.417 \\
Log-normal & 0.3 & 60 & 0.062 & 0.003 & 3.088 \\
Log-normal & 0.3 & 100 & 0.063 & 0.003 & 3.108 \\
Log-normal & 0.8 & 30 & 0.069 & 0.004 & 3.455 \\
Log-normal & 0.8 & 60 & 0.065 & 0.003 & 3.140 \\
Log-normal & 0.8 & 100 & 0.056 & 0.003 & 2.870 \\
Normal & -0.5 & 30 & 0.052 & 0.003 & 2.795 \\
Normal & -0.5 & 60 & 0.053 & 0.003 & 2.761 \\
Normal & -0.5 & 100 & 0.051 & 0.003 & 2.738 \\
Normal & 0.3 & 30 & 0.055 & 0.003 & 2.926 \\
Normal & 0.3 & 60 & 0.049 & 0.003 & 2.654 \\
Normal & 0.3 & 100 & 0.052 & 0.003 & 2.737 \\
Normal & 0.8 & 30 & 0.051 & 0.003 & 2.769 \\
Normal & 0.8 & 60 & 0.051 & 0.003 & 2.718 \\
Normal & 0.8 & 100 & 0.045 & 0.003 & 2.550 \\
Student $t_5$ & -0.5 & 30 & 0.056 & 0.003 & 2.933 \\
Student $t_5$ & -0.5 & 60 & 0.059 & 0.003 & 2.982 \\
Student $t_5$ & -0.5 & 100 & 0.052 & 0.003 & 2.752 \\
Student $t_5$ & 0.3 & 30 & 0.068 & 0.004 & 3.294 \\
Student $t_5$ & 0.3 & 60 & 0.057 & 0.003 & 2.934 \\
Student $t_5$ & 0.3 & 100 & 0.051 & 0.003 & 2.749 \\
Student $t_5$ & 0.8 & 30 & 0.063 & 0.003 & 3.029 \\
Student $t_5$ & 0.8 & 60 & 0.057 & 0.003 & 2.974 \\
Student $t_5$ & 0.8 & 100 & 0.056 & 0.003 & 2.892 \\
\end{longtable}

\begin{longtable}{lrrrrrr}
\caption{Local-to-boundary results at $\rho=0.3$, where $\delta_n=\eta\,\mathrm{sd}(X_2-X_1)/\sqrt n$. At $\eta=0$ the boundary critical value is used; for $\eta>0$ the interior $\chi^2_2$ critical value is used.}\label{tab:local-full}\\
\toprule
Model & $n$ & $\eta$ & $\delta_n$ & Coverage & MCSE & Boundary rate \\
\midrule
\endfirsthead
\toprule
Model & $n$ & $\eta$ & $\delta_n$ & Coverage & MCSE & Boundary rate \\
\midrule
\endhead
\midrule
\multicolumn{7}{r}{Continued on next page}\\
\endfoot
\bottomrule
\endlastfoot
Log-normal & 30 & 0.0 & 0.0000 & 0.905 & 0.004 & 0.507 \\
Log-normal & 30 & 0.5 & 0.1108 & 0.912 & 0.004 & 0.300 \\
Log-normal & 30 & 1.0 & 0.2216 & 0.904 & 0.004 & 0.152 \\
Log-normal & 30 & 2.0 & 0.4433 & 0.897 & 0.004 & 0.024 \\
Log-normal & 30 & 4.0 & 0.8865 & 0.888 & 0.004 & 0.000 \\
Log-normal & 60 & 0.0 & 0.0000 & 0.929 & 0.004 & 0.494 \\
Log-normal & 60 & 0.5 & 0.0784 & 0.943 & 0.003 & 0.309 \\
Log-normal & 60 & 1.0 & 0.1567 & 0.931 & 0.004 & 0.151 \\
Log-normal & 60 & 2.0 & 0.3134 & 0.931 & 0.004 & 0.026 \\
Log-normal & 60 & 4.0 & 0.6269 & 0.926 & 0.004 & 0.000 \\
Log-normal & 100 & 0.0 & 0.0000 & 0.936 & 0.003 & 0.493 \\
Log-normal & 100 & 0.5 & 0.0607 & 0.949 & 0.003 & 0.310 \\
Log-normal & 100 & 1.0 & 0.1214 & 0.944 & 0.003 & 0.162 \\
Log-normal & 100 & 2.0 & 0.2428 & 0.932 & 0.004 & 0.022 \\
Log-normal & 100 & 4.0 & 0.4856 & 0.934 & 0.004 & 0.000 \\
Normal & 30 & 0.0 & 0.0000 & 0.928 & 0.004 & 0.496 \\
Normal & 30 & 0.5 & 0.1080 & 0.947 & 0.003 & 0.310 \\
Normal & 30 & 1.0 & 0.2160 & 0.928 & 0.004 & 0.147 \\
Normal & 30 & 2.0 & 0.4320 & 0.926 & 0.004 & 0.022 \\
Normal & 30 & 4.0 & 0.8641 & 0.932 & 0.004 & 0.000 \\
Normal & 60 & 0.0 & 0.0000 & 0.942 & 0.003 & 0.505 \\
Normal & 60 & 0.5 & 0.0764 & 0.948 & 0.003 & 0.307 \\
Normal & 60 & 1.0 & 0.1528 & 0.954 & 0.003 & 0.164 \\
Normal & 60 & 2.0 & 0.3055 & 0.942 & 0.003 & 0.026 \\
Normal & 60 & 4.0 & 0.6110 & 0.938 & 0.003 & 0.000 \\
Normal & 100 & 0.0 & 0.0000 & 0.945 & 0.003 & 0.505 \\
Normal & 100 & 0.5 & 0.0592 & 0.964 & 0.003 & 0.311 \\
Normal & 100 & 1.0 & 0.1183 & 0.958 & 0.003 & 0.158 \\
Normal & 100 & 2.0 & 0.2366 & 0.943 & 0.003 & 0.023 \\
Normal & 100 & 4.0 & 0.4733 & 0.944 & 0.003 & 0.000 \\
Student $t_5$ & 30 & 0.0 & 0.0000 & 0.912 & 0.004 & 0.504 \\
Student $t_5$ & 30 & 0.5 & 0.1080 & 0.936 & 0.003 & 0.300 \\
Student $t_5$ & 30 & 1.0 & 0.2160 & 0.924 & 0.004 & 0.153 \\
Student $t_5$ & 30 & 2.0 & 0.4320 & 0.909 & 0.004 & 0.022 \\
Student $t_5$ & 30 & 4.0 & 0.8641 & 0.914 & 0.004 & 0.001 \\
Student $t_5$ & 60 & 0.0 & 0.0000 & 0.936 & 0.003 & 0.495 \\
Student $t_5$ & 60 & 0.5 & 0.0764 & 0.947 & 0.003 & 0.308 \\
Student $t_5$ & 60 & 1.0 & 0.1528 & 0.946 & 0.003 & 0.158 \\
Student $t_5$ & 60 & 2.0 & 0.3055 & 0.935 & 0.003 & 0.023 \\
Student $t_5$ & 60 & 4.0 & 0.6110 & 0.935 & 0.003 & 0.000 \\
Student $t_5$ & 100 & 0.0 & 0.0000 & 0.938 & 0.003 & 0.504 \\
Student $t_5$ & 100 & 0.5 & 0.0592 & 0.957 & 0.003 & 0.294 \\
Student $t_5$ & 100 & 1.0 & 0.1183 & 0.949 & 0.003 & 0.157 \\
Student $t_5$ & 100 & 2.0 & 0.2366 & 0.942 & 0.003 & 0.022 \\
Student $t_5$ & 100 & 4.0 & 0.4733 & 0.942 & 0.003 & 0.000 \\
\end{longtable}

\end{document}